\documentclass[12pt,letterpaper]{article}
\usepackage[latin1]{inputenc}
\usepackage[english]{babel}
\usepackage{amssymb,amsmath,amsthm}
\usepackage{a4wide}
\linespread{1.3}
%

\newcommand\R{\mathbb{R}}

 \newtheorem{theo}{Theorem}[section]

 \begin{document}
\title{Isochronicity conditions for some planar polynomial systems}
\author{Islam Boussaada \footnote{LMRS, UMR 6085, Universite de Rouen,
Avenue de l'universit\'e, BP.12,
76801 Saint Etienne du Rouvray, France, islam.boussaada@etu.univ-rouen.fr}, A. Raouf Chouikha 
\footnote{   	
 	Laboratoire Analyse Géometrie et Aplications, URA CNRS 742, Institut Gallilée, Université Paris 13, 99 Avenue J.-B. Clément, 93430 Villetaneuse, France, chouikha@math.univ-paris13.fr} 
 	and Jean-Marie Strelcyn \footnote{LMRS, UMR 6085, Universite de Rouen,
Avenue de l'universit\'e, BP.12,
76801 Saint Etienne du Rouvray and Laboratoire Analyse Géometrie et Aplications, URA CNRS 742, Institut Gallilée, Université Paris 13, 99 Avenue J.-B. Clément, 93430 Villetaneuse, France, Jean-Marie.Strelcyn@univ-rouen.fr, strelcyn@math.univ-paris13.fr}}

\maketitle {}

\bigskip

\begin{abstract}
\begin{center}
\begin{minipage}{16cm}
We study the isochronicity of centers at $O\in \mathbb{R}^2$ for systems 
$\dot x=-y+A(x,y),\;\dot y=x+B(x,y)$, where $A,\;B\in \mathbb{R}[x,y]$, which can be reduced to the Liénard type equation.\ Using the so-called \newline C-algorithm we have found 27  new multiparameter isochronous centers. 
 
 \end{minipage}
\end{center}
\footnote{{\it Key Words and phrases:} \ polynomial systems, center, isochronicity, Liénard type equation,  Urabe function,  first integral, linearizability.\\
2000 Mathematics Subject Classification  \ 34C15, 34C25, 34C37}\\

\end{abstract}

\section{Introduction }
\subsection{Generalities}

Let us consider the system of real differential equations of the form
\begin{equation}\label{GEN1}
\frac{dx}{dt}=\dot x=-y+A(x,y),\qquad \frac{dy}{dt}=\dot y=x+B(x,y),
\end{equation}
where  $(x,y)$ belongs to an  open connected subset  $U\subset {\R}^2$, $A,B\in{C}^1(U,\R)$, where $A$ and $B$ as well as their first derivatives vanish at $(0,0)$.
An isolated
singular point $p\in U$ of system~\eqref{GEN1} is a {\it{center}}
 if  there exists a punctured neighborhood $V\subset U$ of $p$  such that
  every orbit of~\eqref{GEN1} lying  in $V$ is a closed orbit surrounding $p$.
  A center $p$ is {\it{isochronous}} if the period is constant for all closed orbits in some neighborhood of $p$.
  
  The simplest example is the linear isochronous center at the origin $O=(0,0)$ given by the system \begin{equation}\label{LINC} \dot x=-y,\; \dot y=x.\end{equation}
  
  The problem of caracterization of couples $(A,B)$ such that $O$ is an isochronous center (even a center) for the system \eqref{GEN1} is largely open.
  
  An overview  \cite{CS+} present the basic results concerning the problem of the isochronicity, see also \cite{ALS,C1+,C2+,ROSH}. 
  
   The hunting of isochronous centers is now a flourishing activity. By this paper we would like to contribute to it.\\
  
  The well known Poincaré Theorem  asserts that when $A$ and $B$ are real analytic, a center of~\eqref{GEN1} is isochronous if and only if in some real analytic coordinate system it take the form of the linear center~\eqref{LINC}.  Let us formulate now another theorem of the same vein (see for example \cite{ALS}, Th.13.1 and \cite{ROSH}, Th.4.2.1).
  
\noindent{\bf{Theorem A}}\,(\cite{MRT+}, Th.3.3)
{\it{Let us suppose that the origin $O$ is an isochronous center of system~\eqref{GEN1} with real analytic functions $A$ and $B$. Let $F(x,y)=x^2+y^2+o(|(x,y)|^2)$ be an real analytic first integral defined in some neighborhood of $O$. Then there exists a real analytic change of coordinates $u(x,y)=x+o(|(x,y)|),\;v(x,y)=x+o(|(x,y)|)$ bringing the system~\eqref{GEN1} to the linear system $\dot u=-v,\;\dot v=u$ and such that $F(x,y)=u^2(x,y)+v^2(x,y)$.}}

We now pass to the heart of the matter. To make this paper more accessible, we report all strictly technical remarks concerning $C$-algorithm and Gr\"obner basis to Appendix, Sec.7.

In some circumstances system \eqref{GEN1} can be reduced to {\it{the Liénard type equation}}
\begin{equation}\label{L2} 
\ddot x+f(x){\dot x}^2+g(x)=0
\end{equation} 
with $f,\;g \in C^1(J,\mathbb{R})$, where $J$ is some neighborhood of $0\in \mathbb{R}$ and $g(0)=0$. If it is so, the system~\eqref{GEN1} is called {\it{reducible}}.
To the equation~\eqref{L2} one associates equivalent, two dimentional (planar), Liénard type system
\begin{equation}\label{C_n}
\left.\begin{split} \dot x &= y \\ \dot y &= -g(x) - f(x) y^2
 \end{split}\right\} 
   \end{equation}

 For reducible systems considered in this paper, the nature of singular point $O$  for both system~\eqref{GEN1} and ~\eqref{C_n} is the same; in particular this concerns the centers and isochronous centers.
     More precisely, for the purpose of this paper we shall consider two cases where such a reduction is possible.
     \begin{itemize}
\item Case 1:\; When $-y+A(x,y)=-y\tilde{A}(x)$ and $x+B(x,y)=\tilde{B}(x)+\tilde{C}(x)y^2$ system \eqref{GEN1} can be written 
\begin{equation}\label{GL}
\left.\begin{split} \dot x &=-y\tilde{A}(x)\\ \dot y &= \tilde{B}(x)+\tilde{C}(x)y^2
 \end{split}\right\}
 \end{equation}
   By the change of coordinates $(u,v):=(x,-y\tilde{A}(x))$ we get
   
$$\left.\begin{split} \dot u &=v\\ \dot v &= -\tilde{A}(u)\tilde{B}(u)+\frac{\tilde{A}'(u)-\tilde{C}(u)}{\tilde{A}(u)}v^2
 \end{split}\right\}$$ 
 
  In this way we obtain the reduction to the system~\eqref{L2} with 
\begin{equation}\label{GLa} 
f(x)=-\frac{\tilde{A}'(x)-\tilde{C}(x)}{\tilde{A}(x)}  \ and \  g(x)=\tilde{A}(x)\tilde{B}(x).
\end{equation}
   
\item Case 2:\; When $A(x,y)=0$ and $B(x,y)=xP(y)$ where $P(0)=0$. 
   In this case system \eqref{GEN1} can be written 

$$\left.\begin{split} 
\dot x&=-y\\
 \dot y&=x(1+P(y))
     \end{split}\right\}$$
 By the change of coordinates $(u,v):=(y,x(1+P(y)))$ we get
\begin{equation*}
\left.\begin{split} 
\dot u&=v\\ \dot v&=-u(1+P(u))+v^2 \frac{P'(u)}{1+P(u)}
      \end{split}\right\}
       \end{equation*}
We obtain the system~\eqref{L2} with \begin{equation}\label{GLb} f(x)=-\frac{P'(x)}{1+P(x)}\ and \ g(x)=x(1+P(x)).
\end{equation}

\end{itemize}

In both cases the determinant of the Jacobian matrix of coordinate change does not vanish at $(0,0)$. Thus the nature of singular point $O$ is the same for system~\eqref{GEN1} and~\eqref{C_n}. 
     
   Let us return now to the Liénard type equation \eqref{L2}. Let us define the following  functions 
\begin{equation}\label{xixi}
F(x):= \int_0^xf(s) ds, \quad \phi(x):= \int_0^x e^{F(s)} ds.
\end{equation}

When $xg(x)>0$ for $x\neq0$, define the function $X$  by
\begin{equation}\label{xi}
\frac {1}{2} X(x)^2 = \int_0^x g(s) e^{2F(s)} ds
\end{equation}
and $xX(x)>0$ for $x\neq 0$.

Let us formulate now the following theorems which are the starting point of this paper.

\noindent{\bf{Theorem~B}}\,(\cite{S2+}, Th.1)
{\it{Let $f ,\, g \in C^1(J,\R)$. If $xg(x)>0$ for $x\neq0$, then the system~\eqref{C_n} has a center at the origin $O$.
When $f,\; g$ are real analytic , this condition is also necessary.

When $f ,\, g \in C^1(J,\R)$, the first integral of the system~\eqref{C_n} is given by the formula
 \begin{equation}\label{L2FI}
 I(x,\dot x)=2\int_0^x g(s) e^{2F(s)} ds+(\dot x e^{F(x)})^2
 \end{equation}}}

\noindent{\bf{Theorem C}}\,(\cite{C3+}, Th.2.1)
{\it{
Let $f$, $g$ be  real analytic functions defined in a neighborhood $J$ of $0\in \mathbb{R}$, 
and let $x g(x) > 0$ for $x \neq 0$.
Then  system~\eqref{C_n}  has an isochronous center at $O$ if and only if 
there exists an real analytic odd function $h$ which satisfies the following conditions 
 \begin{equation}
 \label{CRI}\frac {X(x)}{1+h(X(x))} = g(x) e^{F(x)},
\end{equation}
the function $\phi(x)$ satisfies 
 \begin{equation}
 \label{bb} \phi(x) = X(x) + \int_0^{X(x)} h(t) dt,
\end{equation}
 and $X(x)\phi (x) > 0$ for $x\neq 0$.

In particular, when $f$ and  $g$ are odd, $O$ is an isochronous center if and only if  $g(x) = e^{-F(x)}\phi (x)$, or equivalently  $h= 0$.
}}

The function $h$ is called {\it{Urabe function}}.
The above Theorem implies\\
{\bf{Corollary A}}\, (\cite{C3+}, Corollary 2.4)
{\it{Let $f$, $g$ be real analytic functions defined in a neighborhood of $0\in\R$, and $x g(x) > 0$ for $x \neq 0$.
The origin $O$ is isochronous center of system~\eqref{C_n} with Urabe function $h=0$ if and only if 
\begin{equation}\label{Null}
g'(x)+g(x)f(x)=1
\end{equation}
for sufficiently small $x$.}}

In the future we shall call the Urabe function of the isochronous center of reducible system \eqref{GEN1} the Urabe function of the corresponding Lienard type equation.

In \cite{C3+} the second author described how to use Theorem~C to build an algorithm (C-algorithm, see Sec.7.1 Appendix for more details) to look for isochronous centers at the origin for reducible system~\eqref{GEN1}, and apply to the case where $A$ and $B$ are polynomials of degree $3$.
This work was continued in \cite{CRC+}. 

The main results obtained in \cite{C3+} and \cite{CRC+}  are the necessary and sufficient conditions for isochronicity of the center at $O$ in term of parameters for the cubic system 

$$\left. \begin{split} \dot x &= - y +a x y + b x^2y\\
 \dot y &= x + a_1x^2 + a_3y^2 + a_4x^3 + a_6xy^2 \end{split}\right\}$$ 

The aim of this paper is  to extend these investigations for systems  with  higher order perturbations of the linear center $ \dot x=-y,\; \dot y=x$.  

Like in~\cite{C3+,CRC+}, our main tool to investigate the isochronous centers for multiparameters systems reducible to Liénard type equation is C-algorithm. Nevertheless, when searching only the isochronous centers with zero Urabe function the Corollary~A gives a much simpler method which is widely used in this paper. It consists in identifying the parameters values for which identity~\eqref{Null} is satisfied.

In all cases considered in \cite{C3+,CRC+} as well as in the present paper the Urabe function is of the form {\large{$h(X)=\frac{k_1X^{s}}{\sqrt{k_2+k_3 X^{2s}}}$}} where $s$ is an odd natural number, $k_1,k_2,k_3 \in \mathbb{R}$ and $k_2>0$.
Like in \cite{C3+,CRC+}, we ask if the Urabe function of corresponding Lienard type equation (called in the sequel also the Urabe function of the isochronous center under consideration) is always of the above form.

One of our contributions is the explicit description of simple multiparameter families of system \eqref{GEN1} with isochronous centers at the origin and with a very complicated coefficients. \\ 
Their complexity clearly indicates that we approach the end of purely enumerative study in this field.

Let us stress that using the change of variables given by a polynomial automorphism of $\R^2$ it is easy to transform a simple system of polynomial differential equation with isochronous center at the origin into a very complicated one. 
But systems thus obtained do not belong to the class of simple and natural systems studied in the present paper.
Our contribution is the explicit  description of such complicated systems in simple and natural multiparameter families of planar polynomial differential systems.

In our investigations we have used {\it{Maple}} in its version 10. 
To compute the Gr\"obner basis  (with DRL order) of the obtained systems of polynomial equations, we have used {\it{Salsa Software }} more precisely the  implementation {\it{FGb}} \cite{F+}.
 \subsection{Beyond the degree $3$}
 
 We now present the list of reducible systems for which we study the isochronous centers at the origin.
 \begin{enumerate}
\item In Section 2 we study the most general homogeneous perturbation of arbitrary degree $n\geq3$ of the linear center which belongs to the Case 1 from the Sec.1.1 :
 \begin{equation}\label{C_hn} 
 \left. \begin{array}{rl} \dot x&= - y+ ax^{n-1}y\\
 \dot y&= x + bx^{n-2}y^2 + cx^n \end{array}\right\} 
\end{equation}
Here we found 3 isochronous centers for even $n\geq4$ and 2 isochronous centers for odd $n\geq3$ which are new.
\item In Sections 3 and 4 we study the most general polynomial perturbation of degree four of the linear center which belongs to the Case 1 from the Sec.1.1 :
 \begin{equation}\label{C_4} 
 \left. \begin{split}\dot x&= - y+ a_{11}xy+ a_{21}x^2y + a_{31}x^3y\\
 \dot y&= x + b_{20}x^2 + b_{30}x^3+ b_{02}y^2 + b_{12}xy^2 + b_{22}x^2y^2+ b_{40}x^4 \end{split}\right\} 
\end{equation}
First using Corollary~A we identify all isochronous centers with zero Urabe function. Here we found 6 isochronous centers which are new.
The study of this system by C-algorithm can not be performed by our actual computer facilities.
Thus, we select for investigation two sub-families; the first one when $ a_{1,1}=b_{3,0}=0$ and the second one when $a_{1,1}= a_{2,1}=0$. Here we found 10 isochronous centers which are new.
\item In Section 5 we study the most general polynomial perturbation of degree five of the linear center which belongs to the Case 1 from the Sec.1.1 :
 \begin{equation}\label{C_5} 
 \left. \begin{split}\dot x&= - y+ a_{11}xy+ a_{21}x^2y + a_{31}x^3y+ a_{41}x^4y\\
 \dot y&= x + b_{20}x^2 + b_{30}x^3+ b_{02}y^2 + b_{12}xy^2 + b_{22}x^2y^2+ b_{32}x^3y^2+ b_{40}x^4+ b_{50}x^5 \end{split}\right\} 
\end{equation}
Using Corollary~A we identify all isochronous centers with zero Urabe function where $b_{50}=0$. Here we found 8 isochronous centers which are new.
\item In Section 6 we study the following Abel system of arbitrary degree $n\geq2$ which belongs to the Case 2 (see Sec.1.1) :
\begin{equation}\label{AbI}
\left. \begin{split}
\dot x&=-y\\ \displaystyle\dot y&=\displaystyle\sum_{k=0}^{n} a_k x{y}^k,
    \end{split}\right\} 
    \end{equation}
   where   $a_k \in \R$, for $k =0,\ldots, n$.  Here we verify that up to $n=9$ there are no other isochronous center than  the one found 
in [19].
 \end{enumerate}
 
 To sum up, we have found 24  multiparameter isochronous centers as well as three infinite families of them that correspond to the perturbations of arbitrary high degree, the whole of which are new. 
 
 Concerning the reduction to the Lienard type equations the systems \eqref{C_hn}-\eqref{C_5} come under case 1, while system \eqref{AbI} come under case 2 (see Sec. 1.1). In particular, for the systems \eqref{C_hn}-\eqref{C_5} the functions $f$ and $g$ from equation (3)  are those given by formulas \eqref{GLa}, while for the system \eqref{AbI} they are those given by formulas \eqref{GLb}.
 
 Let us stress that by Theorem~B, in all the above cases the origin $O$ is always a center (indeed, the condition $xg(x)>0$ for $x\neq0$ is satisfied for sufficiently small $|x|$.

When describing in Sec.3-6 the identified isochronous centers, all parameters intervening in the formulas are arbitrary, except that one always supposes that  the denominators are non zero.
To avoid  misprints all formulas are written exactly in the form produced by {\it{Maple}}. All fractions which appear in the formulas are irreducible. In all cases when we were able to write down first integrals and linearizing changes of variables, the explicite formulas are reported.


 \section{Homogeneous perturbations of arbitrary degree}
 
Taking into account the condition $g'(x)+f(x)g(x)=1$ from Corollary~A, one easily obtains the following Theorem
\begin{theo}\label{TU0} 
For $n\geq 2$ the system~\eqref{C_hn} has an isochronous center at the origin $O$ with zero Urabe function only in one of the following two cases
{\large{
\begin{equation}\label{U01}
 \left. \begin{array}{rl} \dot x&= -y+ax^{n-1}y \\
 \dot y&=x+ax^{n-2}y^2
\end{array}\right\} 
\end{equation}
\begin{equation}\label{U02}
 \left. \begin{array}{rl} \dot x&= y+{\frac {b}{n}}{x}^{n-1}y \\
 \dot y&=x+b{x}^{n-2}{y}^{2}-{\frac {(n-1)b}{{n}^{2}}} {x}^{n}
\end{array}\right\} 
\end{equation}}}
Moreover, for odd $n\geq3$ there are no other isochronous centers.
\end{theo}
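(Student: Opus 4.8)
The plan is to handle both assertions through the reduction to the Liénard type equation \eqref{L2} supplied by Case~1 of Section~1.1. Since system~\eqref{C_hn} has $-y+A(x,y)=-y(1-ax^{n-1})$ and $x+B(x,y)=(x+cx^n)+bx^{n-2}y^2$, I would read off $\tilde A(x)=1-ax^{n-1}$, $\tilde B(x)=x+cx^n$ and $\tilde C(x)=bx^{n-2}$, so that formulas~\eqref{GLa} give
\begin{equation*}
f(x)=\frac{\big((n-1)a+b\big)x^{n-2}}{1-ax^{n-1}},\qquad g(x)=(1-ax^{n-1})(x+cx^n).
\end{equation*}
In particular $g(x)=x+O(x^n)$, so $xg(x)>0$ for small $x\neq0$ and Theorems~B and~C apply.

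For the first assertion I would invoke Corollary~A: the center is isochronous with zero Urabe function if and only if $g'(x)+f(x)g(x)=1$. The factor $1-ax^{n-1}$ cancels in the product $fg$, so after writing $g(x)=x+(c-a)x^n-ac\,x^{2n-1}$ the identity collapses to the vanishing of the coefficients of $x^{n-1}$ and of $x^{2n-2}$. A short computation should yield the two conditions
\begin{equation*}
b=a-nc,\qquad c\,(b-na)=0 .
\end{equation*}
The second equation splits the analysis into the case $c=0$, which together with the first gives $b=a$ and hence system~\eqref{U01}, and the case $b=na$, which forces $c=-\tfrac{(n-1)a}{n}$ and, after renaming the free parameter, produces system~\eqref{U02}. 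This settles the characterization of the zero--Urabe--function isochronous centers for every $n$.

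For the final assertion the decisive point is a parity argument. When $n$ is odd, $x^{n-2}$ and $x^{n}$ are odd while $x^{n-1}$ is even, so both $f$ and $g$ above are \emph{odd} functions, and this holds identically in the parameters $a,b,c$. The last sentence of Theorem~C then asserts that for odd $f$ and $g$ the origin is an isochronous center if and only if $h=0$. Consequently, for odd $n$ every isochronous center of~\eqref{C_hn} already has zero Urabe function, so the complete list is exactly \eqref{U01} and \eqref{U02}, with no others possible. The hard part is really only to notice and verify that the oddness of $f$ and $g$ is automatic for odd $n$: this is what unlocks the special case of Theorem~C, after which the conclusion is immediate. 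I would expect no genuine obstacle in the underlying computations, which are routine; the substance lies entirely in this parity observation.
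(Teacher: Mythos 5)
Your proposal is correct, and its core is the same route as the paper's: the same reduction via \eqref{GLa} to
$f(x)=\frac{\left(b+(n-1)a\right)x^{n-2}}{1-ax^{n-1}}$, $g(x)=(1-ax^{n-1})(x+cx^n)$,
followed by Corollary~A; your two conditions $b=a-nc$ and $c\,(b-na)=0$ are exactly the computation the paper compresses into ``the condition $g'(x)+f(x)g(x)=1$ allows directly the following two cases.'' The differences are at the margins, and they mostly favor you. First, the paper's written proof never actually argues the ``moreover'' clause: your parity observation --- that for odd $n$ the functions $f$ and $g$ above are odd identically in $a,b,c$, so the last assertion of Theorem~C forces $h=0$ for \emph{any} isochronous center, making the zero-Urabe classification exhaustive --- is precisely the missing justification. (The paper only touches on parity inside the proof of Theorem~\ref{HUN}, where it remarks that the candidate Urabe function fails to be odd when $n$ is odd; that remark concerns one specific family and does not by itself rule out other isochronous centers.) Second, the paper's proof goes beyond what the statement needs: it exhibits explicit first integrals and linearizing changes of coordinates for \eqref{U01} and \eqref{U02}, guessed from Maple output for $n=4,6,8$ and then verified in general. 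You can legitimately skip this, since Corollary~A is an equivalence and already yields sufficiency; in the paper those formulas serve as independent confirmation and extra information rather than as a logical ingredient. So your write-up is correct, and on the one point where the theorem requires an argument rather than a computation, it is more complete than the paper's own proof.
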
 
\begin{proof}
System~\eqref{C_hn} is reducible to system~\eqref{C_n} with 
$$f(x)={\frac {{x}^{n-2} \left( b+na-a \right) }{1-a{x}^{n-1}}}\;\; {\rm{and}}\;\;g(x)=\left( 1-a{x}^{n-1} \right)  \left( x+c{x}^{n} \right)$$

The condition $g'(x)+f(x)g(x)=1$ allows directly to the following two cases :
\begin{enumerate}
\item $\left\{ a=b,c=0\right\}$ which gives the system~\eqref{U01}.

\item {\large{$\left\{c=-{\frac {b \left( n-1 \right) }{{n}^{2}}},a={\frac {b}{n}}\right\} $}} which gives the system~\eqref{U02}.
\end{enumerate}
Applying formula~\eqref{L2FI} using {\it{Maple}}, one see that for $n=4,6,8$ the first integral of system~\eqref{U01} takes the form
  \begin{equation*}
  H_{\eqref{U01}}={\frac {\left({x}^{2}+{y}^{2}\right)}{ \left( -1+{\it a}\,{x}^{n-1} \right) ^{\frac{2}{n-1}}}}
  \end{equation*}
 Then Theorem~A suggest that the linearizing change of coordinates is
  \begin{equation}\label{LU01}
 u= {\frac {x}{\sqrt [n-1]{1-{\it a}\,{x}^{n-1}}}},\;\;\;\; v ={\frac {y}{\sqrt [n-1]{1-{\it a}\,{x}^{n-1}}}}
 \end{equation}
 Now one directly verifies that $H_{\eqref{U01}}$ is always a first integral of system~\eqref{U01} and using {\it{Maple}} one easily checks that~\eqref{LU01} is a linearizing change of coordinates.
 
 Exactly the same arguments work for the system~\eqref{U02}.
Its first integral is
  \begin{equation*}
  H_{\eqref{U02}}=\frac{{x}^{2} \left( 1+c{x}^{n-1} \right) ^{2}+{y}^{2} }{ \left( n-1 \right) ^{2}   \left( n-1+nc{x}^{n-1
} \right) ^{{\frac {2n}{n-1}}}}
  \end{equation*}
  and its linearizing change of coordinates is
  \begin{equation*}
 u= \frac{x \left( 1+c{x}^{n-1} \right)} { \left( n-1 \right) \left( 
 \left( n-1+nc{x}^{n-1} \right) ^{{\frac {n}{n-1}}} \right) }
,\;\;\; v =\frac{y} {\left( n-1 \right) \left(  \left( n-1+nc{x}^{n-1}
 \right) ^{{\frac {n}{n-1}}} \right)}
 \end{equation*}

\end{proof}

When $n\geq4$ is even the preliminary investigation of system \eqref{C_hn} performed by C-algorithm  strongly suggests that for such  $n$ there exists exactly one additional  isochronous center with non zero Urabe function. Its existence is proved in Theorem~\ref{HUN}. Unfortunately, its uniqueness is not yet proved for arbitrary even $n\geq4$. For $n=4,\,6,\,8$ the uniqueness was proved using {\it{Maple}} and Gr\"obner Basis method.

Let us point out that our final proofs are done by hand computations, without using computer algebra.

\begin{theo}\label{HUN}
The system~\eqref{C_hn} with arbitrary even $n\geq2$ and $a=2b,\;c=-b,\; b\neq 0$, has an isochronous center at the origin with non zero Urabe function

$$h(X)=\frac{bX^{n-1}}{\sqrt{1+b^2X^{2n-2}}}$$

\end{theo}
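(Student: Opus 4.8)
The plan is to verify directly the hypotheses of Theorem~C for the given odd function $h$. Setting $a=2b$ and $c=-b$ in the expressions for $f,g$ obtained in the proof of Theorem~\ref{TU0}, system~\eqref{C_hn} reduces to the Li\'enard type system~\eqref{C_n} with $f(x)=(2n-1)bx^{n-2}/(1-2bx^{n-1})$ and $g(x)=(1-2bx^{n-1})(x-bx^{n})$. First I would compute the auxiliary functions of~\eqref{xixi} and~\eqref{xi}. Integrating $f$ gives $F(x)=-\frac{2n-1}{2(n-1)}\ln(1-2bx^{n-1})$, hence $e^{F(x)}=(1-2bx^{n-1})^{-\frac{2n-1}{2(n-1)}}$, and a short simplification yields $g(x)e^{F(x)}=x(1-bx^{n-1})(1-2bx^{n-1})^{-\frac{1}{2(n-1)}}$ and $g(x)e^{2F(x)}=x(1-bx^{n-1})(1-2bx^{n-1})^{-\frac{n}{n-1}}$, all valid on a neighborhood of $0$ where $1-2bx^{n-1}>0$.

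The crux --- and the step I expect to be the main obstacle --- is to obtain $X$ in closed form from the defining relation~\eqref{xi}, since at first sight the integral $\int_0^x g e^{2F}$ looks non-elementary for $n\ge3$. My claim is that
\[
\tfrac12 X(x)^2=\tfrac12\,x^2(1-2bx^{n-1})^{-\frac{1}{n-1}},\qquad\text{so}\qquad X(x)=x\,(1-2bx^{n-1})^{-\frac{1}{2(n-1)}}.
\]
Rather than evaluating the integral, I would verify this by differentiating the right-hand side and checking that its derivative equals $g(x)e^{2F(x)}$; the computation collapses thanks to the identity $\frac{1}{n-1}+1=\frac{n}{n-1}$, and both the initial condition and the sign requirement $xX(x)>0$ are clear near $0$.

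With $X$ explicit the remaining verifications are routine. Raising $X$ to the power $n-1$ gives $bX^{n-1}=bx^{n-1}(1-2bx^{n-1})^{-1/2}$, whence $1+b^2X^{2n-2}=(1-bx^{n-1})^2/(1-2bx^{n-1})$ and therefore $h(X(x))=bx^{n-1}/(1-bx^{n-1})$ and $1+h(X(x))=(1-bx^{n-1})^{-1}$; substituting shows $X/(1+h(X))=x(1-bx^{n-1})(1-2bx^{n-1})^{-\frac{1}{2(n-1)}}=g e^{F}$, which is exactly~\eqref{CRI}. Condition~\eqref{bb} then follows for free: the function $X(x)+\int_0^{X(x)}h$ and $\phi(x)$ both vanish at $0$, and their derivatives agree because $\bigl(X+\int_0^X h\bigr)'=X'(1+h(X))=X' X/(g e^{F})=XX'/(g e^{F})=g e^{2F}/(g e^{F})=e^{F}=\phi'$, using~\eqref{CRI} and $XX'=g e^{2F}$. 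Finally, since $n$ is even, $n-1$ is odd, so $h$ is an odd function, and it is analytic near $0$ because $1+b^2X^{2n-2}\ge1$; together with $X\phi>0$ near $0$ this lets me invoke Theorem~C for the reduced system~\eqref{C_n}, and since~\eqref{C_hn} is reducible to~\eqref{C_n} the origin is then an isochronous center of~\eqref{C_hn} with the stated Urabe function.
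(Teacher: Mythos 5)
Your proof is correct and follows essentially the same route as the paper's: reduce to the Li\'enard form with $f(x)=(2n-1)bx^{n-2}/(1-2bx^{n-1})$, $g(x)=(1-2bx^{n-1})(x-bx^n)$, compute $F$, $ge^{F}$, and the closed form $X(x)=x(1-2bx^{n-1})^{-1/(2n-2)}$, then check that $h(X(x))=bx^{n-1}/(1-bx^{n-1})$ turns \eqref{CRI} into an identity. The only differences are cosmetic: the paper first rescales $(x,y)\mapsto(x/b,y/b)$ to normalize $b=1$ and stops after verifying \eqref{CRI}, whereas you keep $b$ throughout and also verify \eqref{bb}, the sign condition $X\phi>0$, and the oddness and analyticity of $h$, which makes your check of Theorem~C's hypotheses somewhat more complete than the paper's own.
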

\begin{proof}
When $a=2b\;{\rm{ and }}\;c=-b$ the system ~\eqref{C_hn} becomes
\begin{equation}\label{HO}
 \left. \begin{array}{rl} \dot x&= - y+ 2bx^{n-1}y\\
 \dot y&= x + bx^{n-2}y^2 -bx^n \end{array}\right\}. 
\end{equation}
The change of variables 
$(x,y)\longmapsto(x/b,y/b)$reduces the system~\eqref{HO} to the form
$$\left. \begin{array}{rl} \dot x&= - y+ 2x^{n-1}y\\
 \dot y&= x + x^{n-2}y^2 -x^n \end{array}\right\}$$ 
which is reducible to the Liénard type equation~\eqref{L2} with
$$f(x)={\frac {\left( -1+2\,n \right){x}^{n-2}}{1-2\,{x}^{n-1}}}\;\; {\rm{and}}\;\;g(x)=\left( 1-2\,{x}^{n-1} \right)  \left( x-{x}^{n} \right)$$
Then $$F(x)=\int_0^x f(s) ds ={\frac { 1-2\,n} {2n-2}\ln  \left( 1-2\,{x}^{n-1} \right) }$$ which gives the right hand side of the equality ~\eqref{CRI}
$$g(x){e^{ F(x)}}=\frac{x(1-x^{n-1})}{(1-2\,x^{n-1})^{\frac{1}{2n-2}}}.$$
On the other hand, $e^{2\,F(x)}=(1-2\,x^{n-1})^{\frac{1-2\,n}{n-1}}$.

>From the equation~\eqref{xi} we compute $$X(x)=\sqrt{2\int_0^{x} g(s)e^{2F(s)}ds}=\frac{x}{(1-2\,x^{n-1})^{\frac{1}{2n-2}}}$$
and $$h(X(x))=\frac{{X(x)}^{n-1}}{\sqrt{1+{X(x)}^{2n-2}}}=\frac{x^{n-1}}{1-x^{n-1}}$$
Then we compute the left hand side of the equality~\eqref{CRI} :
$$\frac{X(x)}{1+h(X(x))}= \frac{\frac{x}{(1-2\,x^{n-1})^{\frac{1}{2n-2}}}}{1+\frac{x^{n-1}}{1-x^{n-1}}}=\frac{x(1-x^{n-1})}{(1-2\,x^{n-1})^{\frac{1}{2n-2}}}$$
Which proves that the equality~\eqref{CRI} is satisfied.
Let us stress that the above computations remain valid for every $n\geq2$. Nevertheless, for $n$ odd $h$ is not an odd function and thus it is not an Urabe function which is odd by definition.\end{proof}
\begin{theo} For arbitrary $n\geq 2$, the system~\eqref{HO} has the following first integral
\begin{equation*}
H_{\eqref{HO}}={\frac {\left({x}^{2}+{y}^{2}\right)^{n-1}}{{2b{x}^{n-1}-1}}}.
\end{equation*}
\end{theo}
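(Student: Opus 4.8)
The plan is to verify directly that $H_{\eqref{HO}}$ is constant along the trajectories of \eqref{HO}, i.e. that its orbital derivative $\dot H = H_x\dot x + H_y\dot y$ vanishes identically on the domain. Rather than differentiating the quotient blindly, I would exploit the Darboux (invariant-curve) structure visible in $H_{\eqref{HO}}=(x^2+y^2)^{n-1}/(2bx^{n-1}-1)$: it suffices to show that the numerator factor $x^2+y^2$ and the denominator $D:=2bx^{n-1}-1$ are both invariant algebraic curves of \eqref{HO} whose cofactors are proportional, with the proportionality matching the exponent $n-1$.

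First I would compute the orbital derivative of $x^2+y^2$. Along \eqref{HO},
$$x\dot x + y\dot y = x(-y+2bx^{n-1}y) + y(x+bx^{n-2}y^2-bx^n) = bx^{n-2}y\,(x^2+y^2),$$
since the $\pm xy$ and $\pm bx^n y$ terms cancel. Hence $\frac{d}{dt}(x^2+y^2)=2bx^{n-2}y\,(x^2+y^2)$, so $x^2+y^2$ is a Darboux polynomial with cofactor $K:=2bx^{n-2}y$, and consequently $\frac{d}{dt}(x^2+y^2)^{n-1}=(n-1)K\,(x^2+y^2)^{n-1}$.

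Next I would differentiate $D=2bx^{n-1}-1$. Since only $\dot x$ enters and $\dot x = y(2bx^{n-1}-1)=yD$,
$$\dot D = 2b(n-1)x^{n-2}\dot x = 2b(n-1)x^{n-2}y\,(2bx^{n-1}-1) = (n-1)K\,D,$$
so that $D$ is also invariant, with cofactor $(n-1)K$, exactly $n-1$ times that of $x^2+y^2$. Finally, for $H=(x^2+y^2)^{n-1}/D$ the logarithmic derivative gives $\dot H/H = (n-1)K-(n-1)K=0$ wherever $H$ is defined and nonzero, which is the claim; by continuity it then holds throughout the domain.

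I do not expect any step to be a genuine obstacle: the whole argument hinges on the single cancellation producing the clean factor $bx^{n-2}y(x^2+y^2)$ in $x\dot x+y\dot y$, together with the observation that differentiating $D$ reproduces $D$ itself times the same factor $K$. The only mild subtlety is bookkeeping the matching of exponent and cofactor ratio, which is automatic here. As an alternative route one could avoid guessing $H$ altogether by transporting the Liénard first integral $I$ of Theorem~B (with $f$, $g$, $F$ and $X$ already computed in the proof of Theorem~\ref{HUN}) back through the Case~1 coordinate change and the rescaling $(x,y)\mapsto(x/b,y/b)$; but the direct verification above is shorter and works uniformly for every $n\ge 2$.
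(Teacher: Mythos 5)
Your proof is correct and takes essentially the same route as the paper: the paper's proof consists of Maple-assisted guessing of the formula for small $n$ followed by the bare assertion that ``one easily can check by hand'' that $H_{\eqref{HO}}$ is a first integral, and your argument is precisely that hand check, carried out in full. Your organization of the verification via the Darboux structure — $x^2+y^2$ and $2bx^{n-1}-1$ are invariant algebraic curves with cofactors $K=2bx^{n-2}y$ and $(n-1)K$ respectively, so the quotient $(x^2+y^2)^{n-1}/(2bx^{n-1}-1)$ has vanishing orbital derivative — is accurate in every step and supplies exactly the computation the paper leaves to the reader.
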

\begin{proof}
Using formula~\eqref{L2FI}, one easily computes by {\it{Maple}} the first integral for $n=4,6,8$. The obtained results strongly suggest the veracity of the formula for $H_{\eqref{HO}}$.
Now one easily can check by hand that $H_{\eqref{HO}}$ is a first integral.
\end{proof}
Let us return to system~\eqref{C_hn}.
It is well known that for $n=2$, this system has an isochronous center in exactly four cases, so called {\it Loud isochronous centers} (see \cite{L+,C3+}). They correspond to ($a=b,\,c=0$), ($a=\frac{b}{2},\,c=-\frac{b}{4}$), ($a=2b,\,c=-b$) and ($b=\frac{a}{4},\,c=0$).
The first two are those from Theorem~\ref{TU0} , the third is the one from Theorem~\ref{HUN}. 

Let us note the Taylor expansion of the Urabe function $h(X)=c_1X+c_3X^3+\ldots$.  As noted at the begining of the Section, for $n=4$ one has exactly $3$ cases of isochronous centers. Why such a difference? The difference is in the algebraic structure of the equations generated by C-algorithm. For $n=2$, the second of such equations is $-3\,c_{{1}}+a-2\,c-b=0$  and $c_{{1}}$ can be non zero, while for $n\geq3$, the second such equation is always $c_{{1}}=0$. Thus the freedom for existence of non zero Urabe function is greater for $n=2$ than for $n\geq3$. 

 
 \section{Non-homogeneous perturbations of degree four with zero Urabe function}
 Taking into account the condition $g'(x)+f(x)g(x)=1$ from Corollary~A, using {\it{Maple}} one easily obtains the following Theorem.
\begin{theo} 
The system~\eqref{C_4} has an isochronous center at the origin $O$ with zero Urabe function only in one of the following six cases, where one supposes that all denominators are non zero polynomials.

\fbox{I}
\begin{equation*}
 \left. \begin{array}{rl} \dot x&= -y+b_{{02}}xy+a_{{21}}{x}^{2}y+a_{{31}}{x}^{3}y               \\
 \dot y&= x+b_{{02}}{y}^{2}+a_{{21}}x{y}^{2}+a_{{31}}{x}^{2}{y}^{2}                         \end{array}\right\} 
\end{equation*}

\medskip

\fbox{II}
\begin{equation*}
 \left. \begin{array}{rl} \dot x&= -y+b_{{02}}xy-\frac{3}{2}\,b_{{30}}{x}^{2}y+b_{{02}}b_{{30}}{x}^{3
}y       \\
 \dot y&=x+b_{{02}}{y}^{2}+b_{{30}}{x}^{3}-\frac{9}{2}\,b_{{30}}x{y}^{2}+3\,b_
{{02}}b_{{30}}{x}^{2}{y}^{2}
                          \end{array}\right\} 
\end{equation*}

\medskip

\fbox{III}
\begin{equation*}
 \left. \begin{array}{rl} \dot x&= -y+ \left( b_{{02}}+2\,b_{{20}} \right) xy+a_{{21}}{x}^{2}y+
 \left( b_{{20}}a_{{21}}-b_{{02}}{b_{{20}}}^{2}-4\,{b_{{
20}}}^{3} \right) {x}^{3}y \\
&~\\
 \dot y&=  x+b_{{20}}{x}^{2}+b_{{02}}{y}^{2}+ \left( a_{{21}}+b_{{02}
}b_{{20}}+4\,{b_{{20}}}^{2} \right) x{y}^{2}\\
&+ \left( 2\,b_{{20
}}a_{{21}}-2\,b_{{02}}{b_{{20}}}^{2}-8\,{b_{{20}}}^{3}
 \right) {x}^{2}{y}^{2}
                        \end{array}\right\} 
\end{equation*}

\medskip

\fbox{IV}
\begin{equation*}
 \left. \begin{array}{rl} \dot x&= -y+{\frac { \left( -9\,b_{{30}}{b_{{20}}}^{2}-{b_{{20}}}^{2}a
_{{21}}+2\,{b_{{20}}}^{4}+4\,b_{{30}}a_{{21}}+6\,{b_{{30
}}}^{2} \right)}{b_{{20}} \left( -{b_{{20}}}^{2}+4\,b_{{30
}} \right) }} xy\\
&~\\
&+a_{{21}}{x}^{2}y+{\frac {b_{{30}} \left( -2\,b_{{30}}{b_{{20}}}^{2}-{b_{{20}}}^{2}a_{{21}}+4\,b_{{30}}a_{
{21}}+6\,{b_{{30}}}^{2} \right) }{b_{{20}} \left( -{b
_{{20}}}^{2}+4\,b_{{30}} \right) }}{x}^{3}y
                                     \\
                                     &~\\
 \dot y&= x+b_{{20}}{x}^{2}+{\frac { \left( -17\,b_{{30}}{b_{{20}}}^{2}
-{b_{{20}}}^{2}a_{{21}}+4\,{b_{{20}}}^{4}+4\,b_{{30}}a_{{
21}}+6\,{b_{{30}}}^{2} \right) }{b_{{20}} \left( -{b_{{
20}}}^{2}+4\,b_{{30}} \right) }}{y}^{2}\\
&~\\
&+b_{{30}}{x}^{3}+2\,{\frac {
 \left( -{b_{{20}}}^{2}a_{{21}}+4\,b_{{30}}a_{{21}}+b_{{
30}}{b_{{20}}}^{2}-3\,{b_{{30}}}^{2} \right)}{-{b_{{
20}}}^{2}+4\,b_{{30}}}} x{y}^{2}\\
&~\\
&+3\,{\frac {b_{{30}} \left( -2\,b_{{3,0
}}{b_{{20}}}^{2}-{b_{{20}}}^{2}a_{{21}}+4\,b_{{30}}a_{{2
1}}+6\,{b_{{30}}}^{2} \right) }{b_{{20}} \left( -
{b_{{20}}}^{2}+4\,b_{{30}} \right) }}{x}^{2}{y}^{2}
                         \end{array}\right\} 
\end{equation*}

\newpage

\fbox{V}
\begin{equation*}
 \left. \begin{array}{rl} \dot x&= -y-{\frac { \left( -32\,b_{{40}}b_{{30}}{b_{{20}}}^{2}+42\,{b
_{{40}}}^{2}b_{{20}}+8\,b_{{40}}{b_{{20}}}^{4}+b_{{40}}
{b_{{30}}}^{2}-2\,{b_{{30}}}^{2}{b_{{20}}}^{3}+7\,{b_{{30}
}}^{3}b_{{20}} \right) }{-{b_{{30}}}^{2}{b_{{20}}}^{2}+4\,a
_{{2,4,0}}{b_{{20}}}^{3}-18\,b_{{40}}b_{{30}}b_{{20}}+27\,
{b_{{40}}}^{2}+4\,{b_{{30}}}^{3}}}xy\\
&~\\
&-{\frac { \left( 6\,{b_{{30
}}}^{4}-2\,{b_{{20}}}^{2}{b_{{30}}}^{3}-27\,b_{{40}}b_{{20
}}{b_{{30}}}^{2}+8\,b_{{30}}b_{{40}}{b_{{20}}}^{3}+39\,b_{
{30}}{b_{{40}}}^{2}-4\,{b_{{40}}}^{2}{b_{{20}}}^{2}
 \right) }{-{b_{{30}}}^{2}{b_{{20}}}^{2}+4\,b_{{40}}{
b_{{20}}}^{3}-18\,b_{{40}}b_{{30}}b_{{20}}+27\,{b_{{40}
}}^{2}+4\,{b_{{30}}}^{3}}}{x}^{2}y\\
&~\\
&-2\,{\frac {b_{{40}} \left( -{b_{{30
}}}^{2}{b_{{20}}}^{2}-14\,b_{{40}}b_{{30}}b_{{20}}+18\,{b_
{{40}}}^{2}+4\,b_{{40}}{b_{{20}}}^{3}+3\,{b_{{30}}}^{3}
 \right) }{-{b_{{30}}}^{2}{b_{{20}}}^{2}+4\,b_{{40}}{
b_{{20}}}^{3}-18\,b_{{40}}b_{{30}}b_{{20}}+27\,{b_{{40}
}}^{2}+4\,{b_{{30}}}^{3}}} {x}^{3}y                                    \\
&~\\
 \dot y&= x+b_{{20}}{x}^{2}-{\frac { \left( -68\,b_{{40}}b_{{30}}{b_{{2
0}}}^{2}+96\,{b_{{40}}}^{2}b_{{20}}+16\,b_{{40}}{b_{{20
}}}^{4}+b_{{40}}{b_{{30}}}^{2}-4\,{b_{{30}}}^{2}{b_{{20}}}
^{3}+15\,{b_{{30}}}^{3}b_{{20}} \right) }{-{b_{{30}}}^
{2}{b_{{20}}}^{2}+4\,b_{{40}}{b_{{20}}}^{3}-18\,b_{{40}}b_
{{30}}b_{{20}}+27\,{b_{{40}}}^{2}+4\,{b_{{30}}}^{3}}}{y}^{2}\\
&~\\
&+b_{{
30}}{x}^{3}-2\,{\frac { \left( 9\,{b_{{30}}}^{4}-3\,{b_{{20}}
}^{2}{b_{{30}}}^{3}-40\,b_{{40}}b_{{20}}{b_{{30}}}^{2}+12
\,b_{{30}}b_{{40}}{b_{{20}}}^{3}+60\,b_{{30}}{b_{{40}}}
^{2}-8\,{b_{{40}}}^{2}{b_{{20}}}^{2} \right) }{-{b_{{30}}}^{2}{b_{{20}}}^{2}+4\,b_{{40}}{b_{{20}}}^{3}-18\,b_{{40}}b_{{30}}b_{{20}}+27\,{b_{{40}}}^{2}+4\,{b_{{30}}}^{3}}
}x{y}^{2}\\
&~\\
&-8\,{\frac {b_{{40}} \left( -{b_{{30}}}^{2}{b_{{20}}}^{2}-14
\,b_{{40}}b_{{30}}b_{{20}}+18\,{b_{{40}}}^{2}+4\,b_{{40
}}{b_{{20}}}^{3}+3\,{b_{{30}}}^{3} \right) }{-{b_{
{30}}}^{2}{b_{{20}}}^{2}+4\,b_{{40}}{b_{{20}}}^{3}-18\,b_{
{40}}b_{{30}}b_{{20}}+27\,{b_{{40}}}^{2}+4\,{b_{{30}}}^
{3}}}{x}^{2}{y}^{2}+b_{{40}}{x}^{4}
                         \end{array}\right\} 
\end{equation*}

\medskip

\fbox{VI}
\begin{equation*}
 \left. \begin{array}{rl} \dot x&={-y+ \left( 2\,b_{{20}}+b_{{02}} \right) xy-\frac{{b_{{20}}}^{2}}{32} \left( 567\,{Z}^{2}b_{{20}}+24
\,Zb_{{02}}-804\,Zb_{{20}}+113\,b_{{20}}-8\,b_{{02}} \right) {x}^{3}y
} 
\\
&~\\
&+{\frac {b_{{20}}
 \left( 13032\,{Z}^{2}b_{{02}}-4488\,Zb_{{02}}+68719\,{Z}^{2}b_{{20}}-
22970\,Zb_{{20}}+384\,b_{{2}}+1943\,b_{{20}} \right) }{24(387\,{Z
}^{2}-144\,Z+13)}}{x}^{2}y\\
&~\\
 \dot y&= x+b_{{20}}{x}^{2}+b_{{02}}{y}^{2}-{\frac {Z{b_{{20}}}^{2} \left( 
27\,Z-17 \right) }{4+12\,Z}}{x}^{3} \\
&-\frac{b_{{20}}}{32} \left( -108\,{Z}^{2}
b_{{02}}+1053\,{Z}^{2}b_{{20}}+152\,Zb_{{02}}-1964\,Zb_{{20}}-76\,b_{{02}
}+155\,b_{{20}} \right) x{y}^{2}\\
&-\frac{{b_{{20}}}^{2}}{8} \left( 567\,{Z}^{
2}b_{{20}}+24\,Zb_{{02}}-804\,Zb_{{20}}+113\,b_{{20}}-8\,b_{{02}}
 \right) {x}^{2}{y}^{2}+1/4\,Z{b_{{20}}}^{3}{x}^{4}
 \end{array}\right\} 
\end{equation*}
where  $Z$ is the only real  root of the equation $27\,{{\it s}}^{3}-47\,{{\it 
s}}^{2}+13\,{\it s}-1=0$, which is equal to $$Z={\frac {1}{81}}\,\sqrt [3]{39428+324\,\sqrt {93}}+{\frac {1156}{81}}\,
{\frac {1}{\sqrt [3]{39428+324\,\sqrt {93}}}}+{\frac {47}{81}}$$
\end{theo}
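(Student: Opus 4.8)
The plan is to invoke Corollary~A, which turns the isochronicity question (for zero Urabe function) into a single polynomial identity. Since system~\eqref{C_4} belongs to Case~1, I first put it in the form~\eqref{GL} by setting
$$\tilde A(x)=1-a_{11}x-a_{21}x^2-a_{31}x^3,\quad \tilde B(x)=x+b_{20}x^2+b_{30}x^3+b_{40}x^4,$$
$$\tilde C(x)=b_{02}+b_{12}x+b_{22}x^2,$$
and then read off $f$ and $g$ from~\eqref{GLa}, namely $g=\tilde A\tilde B$ and $f=-(\tilde A'-\tilde C)/\tilde A$. By Corollary~A, the origin is an isochronous center of the associated Li\'enard system with zero Urabe function if and only if $g'(x)+f(x)g(x)=1$.

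The crucial observation is that this a priori rational condition collapses to a polynomial one. Writing $g'=\tilde A'\tilde B+\tilde A\tilde B'$ and $fg=-(\tilde A'-\tilde C)\tilde B$, the terms $\pm\tilde A'\tilde B$ cancel, so that
$$g'+fg=\tilde A\,\tilde B'+\tilde C\,\tilde B.$$
Hence the isochronicity requirement becomes the polynomial identity $\tilde A\tilde B'+\tilde C\tilde B\equiv 1$. Expanding the left-hand side, the constant term is automatically $1$ (since $\tilde A(0)\tilde B'(0)=1$ and $\tilde B(0)=0$), so the condition is equivalent to the vanishing of the coefficients of $x^1,\dots,x^6$. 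This produces six polynomial equations in the nine parameters $a_{11},a_{21},a_{31},b_{20},b_{30},b_{40},b_{02},b_{12},b_{22}$; for example the top-degree coefficient gives $b_{40}(b_{22}-4a_{31})=0$ and the coefficient of $x^5$ gives $-4a_{21}b_{40}-3a_{31}b_{30}+b_{12}b_{40}+b_{22}b_{30}=0$.

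The remaining task is to determine the complete solution set of these six equations and to show that it decomposes into exactly the six families~\fbox{I}--\fbox{VI}. I would organise this as a case analysis driven by the vanishing of leading coefficients. The equation $b_{40}(b_{22}-4a_{31})=0$ splits the problem into the branch $b_{40}=0$, which upon the further splittings $b_{30}=0$ (cases~\fbox{I} with $b_{20}=0$ and~\fbox{III} with $b_{20}\neq0$) versus $b_{30}\neq0$, forcing $b_{22}=3a_{31}$ (cases~\fbox{II} with $b_{20}=0$ and~\fbox{IV} with $b_{20}\neq0$); and the branch $b_{22}=4a_{31}$ with $b_{40}\neq0$, which contains cases~\fbox{V} and~\fbox{VI}. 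In each branch the lower-degree coefficient equations then express the remaining parameters in terms of the free ones, reproducing the displayed systems.

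The main obstacle is establishing completeness, i.e.\ that the affine variety defined by the six equations has \emph{precisely} these six components and no spurious extra ones; this is a primary-decomposition problem carried out in practice by a Gr\"obner basis computation (DRL order, FGb) as described in the Appendix. The genuinely delicate branch is case~\fbox{VI}: here the elimination does not leave a free rational family but forces one auxiliary quantity to satisfy the cubic $27s^3-47s^2+13s-1=0$. I would then verify that this cubic has a unique real root $Z$ (for instance by checking the sign of its discriminant), and confirm that only this real root yields a real coefficient system, which is exactly why case~\fbox{VI} carries the explicit radical expression for $Z$. Once the finitely many components are isolated by the symbolic computation, each candidate family can be checked to satisfy $\tilde A\tilde B'+\tilde C\tilde B\equiv1$ directly by hand.
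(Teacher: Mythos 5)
Your proposal is correct and follows essentially the same route as the paper: Corollary~A reduces the problem to the identity $g'(x)+f(x)g(x)=1$, which (as you observe via the cancellation $g'+fg=\tilde A\tilde B'+\tilde C\tilde B$) is a system of polynomial equations in the nine parameters, solved by case analysis and Gr\"obner-basis computation. In fact you supply more detail than the paper, whose proof consists of the single remark that the condition from Corollary~A is imposed and resolved ``using \emph{Maple}''; your explicit coefficient equations, branching on $b_{40}(b_{22}-4a_{31})=0$, and the discriminant check for the cubic $27s^3-47s^2+13s-1$ are all consistent with the stated six families.
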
 
  

\section{Non-homogeneous  perturbations of degree four}

Let us consider  system~\eqref{C_4}. We would like to identify all its isochronous centers by C-algorithm, without taking into account the nature of its Urabe function.
 In full generality, this problem cannot be attained by our actual computer possiblities.
 Indeed, we do not succeed to compute a Gr\"obner basis for the nine C-algorithm generated polynomials on $9$ unknown $\{a_{ij}\}$ and $\{b_{sr}\}$ of all even degrees between $2$ and $18$.
  
   Inspecting the system under consideration one sees that the annulation of some parameters $\{a_{ij}\}$ and $\{b_{rs}\}$ will substantially simplify the system. This is the reason of our choice of two families presented below.

\subsection{First family}

Let us assume $a_{11}=b_{30}=0$, in this case system~\eqref{C_4} reduces to the system

\begin{equation}\label{C4B}\left.\begin{array}{rl} \dot x& = - y + a_{21}x^2y +a_{31}x^3y\\
 \dot y &= x + b_{20}x^2 + b_{02}y^2  + b_{12}xy^2+b_{22}x^2y^2+b_{40}x^4 \end{array}\right\} \end{equation}

\begin{theo}\label{FAM1} 
 The system~\eqref{C4B}  has an isochronous center at $O$ if and only if its parameters satisfy one of the folowing 6 conditions :

\fbox{I} 
\begin{equation*}
 \left. \begin{array}{rl} \dot x&= y+{\frac {b_{22}}{4}}{x}^{3}y \\
 \dot y&=x+b_{22}{x}^{2}{y}^{2}-{\frac {3b_{22} }{16}} {x}^{4}
\end{array}\right\} 
\end{equation*}

\medskip

\fbox{II}
\begin{equation*}
 \left. \begin{array}{rl} \dot x&= - y+ 2b_{22}x^{3}y\\
 \dot y&= x + b_{22}x^{2}y^2 -b_{22}x^4 \end{array}\right\}. 
\end{equation*}

\medskip

\fbox{III}
\begin{equation*}
 \left. \begin{array}{rl} \dot x&= -y+a_{{21}}{x}^{2}y+a_{{31}}{x}^{3}y                \\
                          \dot y&= x+a_{{21}}x{y}^{2}+a_{{31}}{x}^{2}{y}^{2}                                          \end{array}\right\} 
\end{equation*}

\medskip

\fbox{IV}
\begin{equation*}
 \left. \begin{array}{rl} \dot x&=-y+\frac{2}{3}\,{b_{{20}}}^{2}{x}^{2}y-\frac{4}{3}\,{b_{{20}}}^{3}{x}^{3}y                 \\
                          \dot y&=x+b_{{20}}{x}^{2}-2\,b_{{20}}{y}^{2}+\frac{8}{3}\,{b_{{20}}}^{2}x{y}^{2}-\frac{8}{3}
\,{b_{{20}}}^{3}{x}^{2}{y}^{2}
                                          \end{array}\right\} 
\end{equation*}

\medskip

\fbox{V}

\begin{equation*}
 \left. \begin{array}{rl} \dot x&=-y+\frac{2}{3}\,{b_{{20}}}^{2}{x}^{2}y-\frac{4}{3}\,{b_{{20}}}^{3}{x}^{3}y                 \\
                          \dot y&=x+b_{{20}}{x}^{2}-2\,b_{{20}}{y}^{2}+\frac{8}{3}\,{b_{{20}}}^{2}x{y}^{2}-\frac{8}{3}
\,{b_{{20}}}^{3}{x}^{2}{y}^{2}
                                           \end{array}\right\} 
\end{equation*}

\medskip

\fbox{VI}

\begin{equation*}
 \left. \begin{array}{rl} \dot x&= -y+{b_{{20}}}^{2} \left( 2+{\frac {a_{{31}}}{{b_{{20}}}^{3}}}
 \right) {x}^{2}y+a_{{31}}{x}^{3}y
                \\
                          \dot y&= x+b_{{20}}{x}^{2}-2\,b_{{20}}{y}^{2}+{b_{{20}}}^{2} \left( 4+{
\frac {a_{{31}}}{{b_{{20}}}^{3}}} \right) x{y}^{2}+2\,a_{{31}}{x}^{
2}{y}^{2}
                                          \end{array}\right\} 
                                         \end{equation*}
 
 where  $b_{20}\neq0$.

\end{theo}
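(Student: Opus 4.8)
The plan is to run the C-algorithm on system~\eqref{C4B}, turn isochronicity into a system of polynomial equations in the parameters, solve it with Gr\"obner bases, and then certify each surviving case by hand. First I would put \eqref{C4B} into Li\'enard form. It belongs to Case~1 of Section~1.1, with $\tilde A(x)=1-a_{21}x^2-a_{31}x^3$, $\tilde B(x)=x+b_{20}x^2+b_{40}x^4$ and $\tilde C(x)=b_{02}+b_{12}x+b_{22}x^2$, so formulas~\eqref{GLa} give $f$ and $g$ explicitly. Since $g(x)=x+O(x^2)$, one has $xg(x)>0$ for small $x\neq0$, whence Theorem~B already guarantees that $O$ is a center; only the isochronicity is in question.

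Next I would apply Theorem~C. Expanding $F$, $e^{F}$, $\phi$ and $X$ as power series in $x$ via \eqref{xixi}--\eqref{xi} and inverting the series $X=X(x)$, relation~\eqref{CRI} determines a unique series $h$ through $1+h(X)=X/(g(x)e^{F(x)})$. A short computation — differentiate~\eqref{bb}, then use $\phi'=e^{F}$ and $XX'=ge^{2F}$ — shows that \eqref{bb} is exactly the integrated form of~\eqref{CRI}, hence adds nothing once \eqref{CRI} holds. Thus, by Theorem~C, the origin is isochronous precisely when this $h$ is odd, i.e. when all its even-order Taylor coefficients vanish. Collecting these coefficients produces a sequence of polynomial relations $\Xi_2=\Xi_4=\cdots=0$ in the seven parameters $a_{21},a_{31},b_{20},b_{02},b_{12},b_{22},b_{40}$ — the C-algorithm polynomials.

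I would then compute a Gr\"obner basis (DRL order, via FGb) of the ideal generated by $\Xi_2,\Xi_4,\dots$ truncated at a sufficiently high order, and decompose its zero locus into irreducible components, which I expect to be exactly the families~I--VI, subject to the open conditions (e.g. $b_{20}\neq0$ in~VI). Sufficiency is then verified component by component. Cases~I and~III are zero-Urabe centers, most quickly certified by Corollary~A via $g'+fg=1$ (indeed case~I is \eqref{U02} with $n=4$, and case~III is the $b_{02}=0$ specialization of the degree-four zero-Urabe list of Section~3); case~II is literally \eqref{HO} with $n=4$, so Theorem~\ref{HUN} supplies the odd Urabe function $h(X)=b_{22}X^3/\sqrt{1+b_{22}^2X^6}$; and for the one-parameter family~VI — of which IV (equivalently~V) is the value $a_{31}=-\tfrac{4}{3}b_{20}^{3}$ — I would exhibit an odd Urabe function of the anticipated shape $k_1X^{s}/\sqrt{k_2+k_3X^{2s}}$, together with a first integral and a linearizing change of variables, each confirmed by direct substitution.

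The delicate point is the necessity direction. Truncating the C-algorithm yields only finitely many obstructions, so a priori their common zero set $V$ could exceed the true isochronicity locus $\mathcal I$; moreover the Gr\"obner computation is the bottleneck — this is exactly why the full system~\eqref{C_4} is out of reach and why we impose $a_{11}=b_{30}=0$ — and one must push the expansion until $V$ stabilizes. The argument closes because $\mathcal I\subseteq V$ holds for any truncation, while the explicit verifications place each of the six components inside $\mathcal I$; since $V$ equals the union of those components, we obtain $\mathcal I=V$, which is precisely the claimed equivalence. The genuine risk is that an insufficient truncation leaves a spurious component that no substitution can certify, forcing the expansion to still higher, and computationally heavier, order.
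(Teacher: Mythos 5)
Your overall architecture coincides with the paper's: reduce \eqref{C4B} to Li\'enard form via \eqref{GLa}, generate necessary conditions by the C-algorithm, solve them by Gr\"obner bases to obtain the six candidate families, and certify each candidate, closing the equivalence by the inclusion $\mathcal I\subseteq V$ plus componentwise certification. Your side remark that \eqref{bb} is just the integrated form of \eqref{CRI} is correct, and your identifications of case~I with \eqref{U02} at $n=4$, of case~II with \eqref{HO} (Theorem~\ref{HUN}), and of case~III with the $b_{02}=0$ specialization of the Section~3 list are all right.

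There is, however, a genuine misstep in your sufficiency plan for cases IV--VI. These are \emph{not} nonzero-Urabe centers: for family VI one has $a_{21}=2b_{20}^2+a_{31}/b_{20}$, $b_{12}=4b_{20}^2+a_{31}/b_{20}$, $b_{22}=2a_{31}$, and a direct computation with \eqref{GLa} shows that the coefficients of $x^2$ and $x^3$ in $g'(x)+f(x)g(x)-1$ cancel identically, so $g'+fg=1$ and Corollary~A certifies VI (hence also IV$\,=\,$V, which, as you correctly note, is VI at $a_{31}=-\tfrac{4}{3}b_{20}^{3}$) exactly as it does cases I and III; indeed VI is Section~3's case III restricted to $b_{02}=-2b_{20}$. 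This matters because the Urabe function is \emph{uniquely} determined by \eqref{CRI} once $f$ and $g$ are fixed: a hunt for a nonzero odd $h$ of the shape $k_1X^{s}/\sqrt{k_2+k_3X^{2s}}$ with $k_1\neq 0$ for family VI would simply fail, and your plan survives only because $h=0$ is the degenerate member of that family. The paper's route (check the single identity $g'+fg=1$ for I and III--VI, and invoke Theorem~\ref{HUN} for II) is both the correct classification and far lighter than exhibiting first integrals and linearizations. A second, practical gap: you propose to run the Gr\"obner computation on the raw C-algorithm polynomials, but the paper stresses that even for this reduced family the elimination (19 derivations) is only feasible after the weighted-homogenization reparametrization and the scaling normalization $B_{20}=1$ of Sec.~7.2; without these tricks the necessity step you describe is likely out of computational reach.
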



 \begin{proof}
 C-algorithm gives the six candidates to be isochronous centers. We had to derive 19 times to get
  the necessary conditions of isochronicity. 
 
To apply succesfully the C-algorithm, we use the two tricks explained in Appendix, Sec.7.2 : homogenization and reduction of the dimension of the parameters space by one.
This leads to the proof that the cases I-VI of Theorem \ref{FAM1} satisfy the necessary conditions of isochronicity.
We check that the necessary conditions are also sufficient by direct application of Corollary~A  to the cases I, III-VI.
Indeed, in all those four cases $g'(x)+f(x)g(x)=1.$
For sufficiently small $x$ the case II is a particular case of the system \eqref{HO} when $n=4$, studied in Theorem~\ref{HUN}.
\end{proof}

Let us note that among the above six cases only the cases I, II and III with $a_{21}=0$ represent the homogeneous perturbations.
All other cases are non-homogeneous.

Note also that the above three homogeneous cases were already identified by Theorems \ref{HUN} and \ref{TU0}. But contrary to the quoted Theorems, here we have the exhaustive list of isochronous centers for $n=4$.

\subsection{Second family}

Consider system~\eqref{C_4}, with $a_{11}=a_{21}=0$.
We obtain the seven parameter real system of degree $4$.
\begin{equation}\label{C4B1}
\left. \begin{split} \dot x &= - y +a_{31}x^3y\\
 \dot y &= x + b_{20}x^2 + b_{02}y^2 + b_{30}x^3  + b_{12}xy^2+b_{22}x^2y^2+b_{40}x^4 \end{split}\right\} 
 \end{equation}

\begin{theo}
  The system~\eqref{C4B1}  has an isochronous center at $O$ if and only if its parameters satisfy one of the folowing seven cases :
  
 The three cases I, II and III with $a_{21}=0$ come from Theorem~\ref{FAM1} and correspond to homogeneous perturbations. The following four cases  correspond to the non-homogeneous perturbations.

\fbox{IV}

\begin{equation*}
 \left. \begin{array}{rl} \dot x&=-y+\frac{1}{4}\,{b_{{02}}}^{3}{x}^{3}y                 \\
                          \dot y&=x-\frac{1}{2}\,b_{{02}}{x}^{2}+b_{{02}}{y}^{2}+\frac{1}{2}\,{b_{{02}}}^{2}x{y}^{2}+
\frac{1}{2}\,{b_{{02}}}^{3}{x}^{2}{y}^{2}
                                           \end{array}\right\} 
\end{equation*}

\medskip

\fbox{V}
 \begin{equation*}
 \left. \begin{array}{rl} \dot x&=-y+{\frac {1}{192}}\,{b_{{02}}}^{3} \left( -21+5\,\sqrt {33} \right) 
{x}^{3}y
                \\
                          \dot y&=x- \frac{1}{2}\,b_{{02}}{x}^{2}+b_{{02}}{y}^{2}+\frac{1}{48}\,{b_{{02}}}^{2} \left( 
9-\sqrt {33} \right) {x}^{3}\\
&+\frac{1}{16}\,{b_{{02}}}^{2} \left( -1+\sqrt {33
} \right) x{y}^{2}+{\frac {1}{64}}\,{b_{{02}}}^{3} \left( -21+5\,
\sqrt {33} \right) {x}^{2}{y}^{2}
                                           \end{array}\right\} 
\end{equation*}

\medskip

\fbox{VI} 
  \begin{equation*}
 \left. \begin{array}{rl} \dot x&= -y-{\frac {1}{192}}\,{b_{{02}}}^{3} \left( 21+5\,\sqrt {33} \right) {
x}^{3}y
                \\
                          \dot y&=x-\frac{1}{2}\,b_{{02}}{x}^{2}+b_{{02}}{y}^{2}+\frac{1}{48}\,{b_{{02}}}^{2} \left( 
9+\sqrt {33} \right) {x}^{3}\\
&-\frac{1}{16}\,{b_{{02}}}^{2} \left( 1+\sqrt {33}
 \right) x{y}^{2}-{\frac {1}{64}}\,{b_{{02}}}^{3} \left( 21+5\,\sqrt 
{33} \right) {x}^{2}{y}^{2}
                                          \end{array}\right\} 
\end{equation*}

\medskip

\fbox{VII} 
\begin{equation*}
 \left. \begin{array}{rl} \dot x&= -y+{\frac {2}{3549}}\,{\frac {{b_{{20}}}^{3} \left( -43\,{t}^{2/3}-
7670\,\sqrt {3297}+12112\,\sqrt [3]{t}+52\,\sqrt [3]{t}\sqrt {3297}-
336886 \right) }{{t}^{2/3}}}{x}^{3}y

                \\
&~\\
                          \dot y&=x-2\,b_{{2,0}}{y}^{2}-{\frac {1}{10647}}\,{\frac {b_{{2,0}} \left( -
3822\,b_{{2,0}}{t}^{2/3}-6242964\,b_{{2,0}}-127764\,b_{{2,0}}\sqrt {
3297}+159432\,b_{{2,0}}\sqrt [3]{t} \right) }{{t}^{2/3}}}x{y}^{2}\\
&~\\
&-{
\frac {1}{10647}}\,{\frac {b_{{2,0}} \left( 1032\,{b_{{2,0}}}^{2}{t}^{
2/3}+184080\,{b_{{2,0}}}^{2}\sqrt {3297}-290688\,{b_{{2,0}}}^{2}\sqrt 
[3]{t}-1248\,{b_{{2,0}}}^{2}\sqrt {3297}\sqrt [3]{t}+8085264\,{b_{{2,0
}}}^{2} \right)}{{t}^{2/3}}}{x}^{2}{y}^{2}\\
&~\\
&+b_{{2,0}}{x}^{2}-{\frac {1
}{10647}}\,{\frac {b_{{2,0}} \left( -53144\,b_{{2,0}}\sqrt [3]{t}+
2080988\,b_{{2,0}}+42588\,b_{{2,0}}\sqrt {3297}-5824\,b_{{2,0}}{t}^{2/
3} \right)}{{t}^{2/3}}} {x}^{3}\\
&~\\
&-{\frac {1}{10647}}\,{\frac {b_{{2,0}}
 \left( -2150\,{b_{{2,0}}}^{2}{t}^{2/3}-11926\,{b_{{2,0}}}^{2}\sqrt [3
]{t}+234\,{b_{{2,0}}}^{2}\sqrt {3297}\sqrt [3]{t}+1085248\,{b_{{2,0}}}
^{2}+18720\,{b_{{2,0}}}^{2}\sqrt {3297} \right)}{{t}^{2/3}}} {x}^{4}
 \end{array}\right\} 
\end{equation*}
where $t=22868+468\,\sqrt {3297}$

\end{theo}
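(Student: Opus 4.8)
The plan is to follow verbatim the strategy used for Theorem~\ref{FAM1}. First I would place system \eqref{C4B1} in Case~1 of Section~1.1 by reading off
$$\tilde A(x)=1-a_{31}x^3,\quad \tilde B(x)=x+b_{20}x^2+b_{30}x^3+b_{40}x^4,\quad \tilde C(x)=b_{02}+b_{12}x+b_{22}x^2,$$
and computing, via \eqref{GLa}, the data of the associated Li\'enard type equation \eqref{L2}:
$$f(x)=\frac{3a_{31}x^2+b_{02}+b_{12}x+b_{22}x^2}{1-a_{31}x^3},\qquad g(x)=(1-a_{31}x^3)(x+b_{20}x^2+b_{30}x^3+b_{40}x^4).$$
Since $xg(x)>0$ for small $x\neq0$, Theorem~B already gives a center for every choice of the seven parameters, so only isochronicity is in question.

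I would then run the C-algorithm on this pair $(f,g)$. Writing a formal Urabe function $h(X)=c_1X+c_2X^2+\cdots$, relation \eqref{CRI} determines each coefficient $c_k$ as a polynomial in $a_{31},b_{20},b_{02},b_{30},b_{12},b_{22},b_{40}$; by Theorem~C isochronicity is equivalent to $h$ being odd (condition \eqref{bb} then following by integration), i.e.\ to the vanishing of every even coefficient $c_{2k}$. Carrying this out to high enough order (the nineteen successive derivations needed in Theorem~\ref{FAM1}) produces a finite list of polynomial obstructions whose common zero set contains all isochronous parameter values. A direct Gr\"obner basis on seven unknowns being out of reach, I would invoke the two tricks of the Appendix, Sec.~7.2: homogenize the ideal by giving the coefficient of a monomial $x^iy^j$ the weight $i+j-1$ (the obstructions being weighted-homogeneous for this grading), and exploit the scaling $(x,y)\mapsto(\lambda x,\lambda y)$, which multiplies that coefficient by $\lambda^{1-i-j}$, to normalize one nonzero parameter and drop the parameter dimension by one.

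With these reductions I would compute the Gr\"obner basis (DRL order, \emph{FGb}) of the obstruction ideal and decompose its real variety. The assertion is that it splits into exactly seven branches: three recover, after setting $a_{21}=0$, the homogeneous families I, II, III, and the remaining four are the non-homogeneous families IV--VII; this yields necessity. For sufficiency I would test each branch separately, establishing it either through Corollary~A, by verifying that $g'(x)+f(x)g(x)=1$ holds identically (zero Urabe function), or, when the Urabe function is nonzero, by exhibiting it explicitly and checking \eqref{CRI}; case~II in particular is the $n=4$ specialization of system \eqref{HO}, already settled in Theorem~\ref{HUN}.

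The principal obstacle is the elimination step: on seven parameters the high-degree obstruction polynomials are intractable without the weighting and the one-parameter scaling reduction, and even afterwards isolating exactly seven irreducible components is delicate. The sharpest algebraic difficulty is case~VII, whose coefficients are built from nested radicals in $t=22868+468\sqrt{3297}$; one must both confirm that this constant genuinely arises as a root of the resultant emerging from the Gr\"obner computation (rather than as a spurious branch) and then verify the isochronicity certificate---presumably $g'+fg=1$---through a direct but heavy identity check.
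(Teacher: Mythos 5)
Your proposal takes essentially the same route as the paper: the C-algorithm (aided by the weighted-homogenization and rescaling tricks of Sec.~7.2) generates the necessary conditions whose Gr\"obner-basis decomposition yields exactly the seven cases, and sufficiency is then settled by Corollary~A for the non-homogeneous cases IV--VII (the paper confirms your guess that $g'(x)+f(x)g(x)=1$ holds in all four, including VII), with cases I--III inherited from Theorems~\ref{FAM1} and~\ref{HUN}. Your reformulation of the algorithm --- solving \eqref{CRI} for a general formal series $h$ and imposing the vanishing of its even coefficients --- is equivalent to the paper's elimination of the odd coefficients $c_{2k+1}$, so there is no substantive difference.
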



\begin{proof}
Thanks to C-algorithm we obtain the necessary conditions for the isochronicity of the center at the origin for system~\eqref{C4B1} and we establish the seven cases given in the theorem.

We check that the obtained necessary conditions are also sufficient by direct application of Corollary~A  to the cases IV-VII.
Indeed, in all those four cases $g'(x)+f(x)g(x)=1$ for sufficiently small $x$.

\end{proof}
The centers $I-III$ of Theorem 2 have been already identified in \cite{ROMA4}.


\section{Non-homogeneous perturbations of degree five with zero Urabe function}
By Corollary~A the problem is reduced to solving the equation  $g'(x)+f(x)g(x)=1$, with $f$ and $g$ defined in Sec.1.1 with respect to the system~\eqref{C_5}. In this case the equation $g'(x)+f(x)g(x)=1$ is equivalent to some system of $8$ polynomials depending on $12$ unknown $\{a_{ij}\}$ and $\{b_{sr}\}$ of degree $2, 2, 2, 1, 2, 2, 2, 2$. Applying the Gr\"obner basis method one obtains a basis of $90$ polynomials whose degrees varies between 1 and 8. This system is too hard to handle. By inspecting the system one sees that when $b_{50}=0$ it is very much simplified. This is the reason of our choice $b_{50}=0$.
\begin{theo} 
The system~\eqref{C_5}  where $b_{50}=0$ has an isochronous center at the origin $O$ with zero Urabe function only in one of the following 8 cases, where one supposes that all denominators are non zero polynomials.

\fbox{I}

\begin{equation*}
 \left. \begin{array}{rl} \dot x&= -y+a_{{11}}xy+b_{{12}}{x}^{2}y+a_{{31}}{x}^{3}y+b_{{32}}{x}^{4}y
\\
 \dot y&=x+a_{{11}}{y}^{2}+b_{{12}}x{y}^{2}+a_{{31}}{x}^{2}{y}^{2}+b_{{32}}{x}^
{3}{y}^{2}
   \end{array}\right\} 
\end{equation*}

\medskip

\fbox{II}
\begin{equation*}
 \left. \begin{array}{rl} \dot x&= -y+a_{{11}}xy+a_{{31}}{x}^{3}y-\frac{3}{4}\,a_{{11}}a_{{31}}{x}^{4}y\\
 \dot y&=x+a_{{11}}{y}^{2}+4\,a_{{31}}{x}^{2}{y}^{2}-\frac{3}{4}\,a_{{31}}{x}^{4}-3\,a_
{{11}}a_{{31}}{x}^{3}{y}^{2}
 \end{array}\right\} 
\end{equation*}

\medskip

\fbox{III}
\begin{equation*}
 \left. \begin{array}{rl} \dot x&=-y+{\frac {a_{{31}}}{b_{{30}}}}xy+ \left( 3\,b_{{30}}+b_{{12}}
 \right) {x}^{2}y+a_{{31}}{x}^{3}y+ \left( \frac{9}{2}\,{b_{{30}}}^{2}+b_{{12}
}b_{{30}} \right) {x}^{4}y

              \\
 \dot y&=x+{\frac {a_{{31}}}{b_{{30}}}}{y}^{2}+b_{{30}}{x}^{3}+b_{{12}}x{y}^{2}
+3\,a_{{31}}{x}^{2}{y}^{2}+ \left( {\frac {27}{2}}\,{b_{{30}}}^{2}+3\,
b_{{12}}b_{{30}} \right) {x}^{3}{y}^{2}
     \end{array}\right\} 
\end{equation*}

\medskip

\fbox{IV}
\begin{equation*}
 \left. \begin{array}{rl} \dot x&=-y+a_{{11}}xy+ \left( b_{{12}}-2\,{b_{{20}}}^{2}-a_{{11}}b_{{20}}
 \right) {x}^{2}y+a_{{31}}{x}^{3}y+\\
& \left( -b_{{12}}{b_{{20}}}^{2}+4\,
{b_{{20}}}^{4}+2\,a_{{11}}{b_{{20}}}^{3}+a_{{31}}b_{{20}} \right) {x}^
{4}y

             \\
&~\\
 \dot y&= x+b_{{20}}{x}^{2}+ \left( -2\,b_{{20}}+a_{{11}} \right) {y}^{2}+b_{{12
}}x{y}^{2}+ \left( b_{{12}}b_{{20}}-4\,{b_{{20}}}^{3}-2\,a_{{11}}{b_{{
20}}}^{2}+a_{{31}} \right) {x}^{2}{y}^{2}\\
&+ \left( -2\,b_{{12}}{b_{{20}
}}^{2}+8\,{b_{{20}}}^{4}+4\,a_{{11}}{b_{{20}}}^{3}+2\,a_{{31}}b_{{20}}
 \right) {x}^{3}{y}^{2}

    \end{array}\right\} 
\end{equation*}

\medskip

\fbox{V}
\begin{equation*}
 \left. \begin{array}{rl} \dot x&=-y+a_{{11}}xy+a_{{31}}{x}^{3}y\\
&~\\
&-{\frac { \left( 13\,{b_{{30}}}^{2}b_{{20}}-11\,b_{{30}}{
b_{{20}}}^{3}-5\,b_{{30}}a_{{11}}{b_{{20}}}^{2}+2\,{b_{{20}}}^{5}+a_{{
11}}{b_{{20}}}^{4}+a_{{31}}{b_{{20}}}^{2}-4\,a_{{31}}b_{{30}}+4\,a_{{
11}}{b_{{30}}}^{2} \right) }{b_{{20}} \left( 4\,b_{{30}}-{b_{{
20}}}^{2} \right) }}{x}^{2}y\\
&~\\
&-{\frac {b_{{30}} \left( -b_{{30}
}a_{{11}}{b_{{20}}}^{2}+7\,{b_{{30}}}^{2}b_{{20}}-2\,b_{{30}}{b_{{20}}
}^{3}-4\,a_{{31}}b_{{30}}+4\,a_{{11}}{b_{{30}}}^{2}+a_{{31}}{b_{{20}}}
^{2} \right) }{b_{{20}} \left( 4\,b_{{30}}-{b_{{20}}}^{2}
 \right) }}{x}^{4}y
 \\
&~\\
 \dot y&=x+b_{{20}}{x}^{2}+ \left( -2\,b_{{20}}+a_{{11}} \right) {y}^{2}+b_{{30
}}{x}^{3}\\
&~\\
&-{\frac { \left( 25\,{b_{{30}}}^{2}b_{{20}}-22\,b_{{30}}{b_{{
20}}}^{3}-9\,b_{{30}}a_{{11}}{b_{{20}}}^{2}+4\,{b_{{20}}}^{5}+2\,a_{{
11}}{b_{{20}}}^{4}+a_{{31}}{b_{{20}}}^{2}-4\,a_{{31}}b_{{30}}+4\,a_{{
11}}{b_{{30}}}^{2} \right) }{b_{{20}} \left( 4\,b_{{30}}-{b_{{
20}}}^{2} \right) }}x{y}^{2}\\
&~\\
&+{\frac { \left( 7\,{b_{{30}}}^{2}b_{{20}}-2\,b_{{
30}}{b_{{20}}}^{3}-b_{{30}}a_{{11}}{b_{{20}}}^{2}-2\,a_{{31}}{b_{{20}}
}^{2}+8\,a_{{31}}b_{{30}}+4\,a_{{11}}{b_{{30}}}^{2} \right) }{4\,b_{{30}}-{b_{{20}}}^{2}}}{x}^{2}{y}
^{2}\\
&~\\
&-3\,{\frac {b_{{30}} \left( -b_{{30}
}a_{{11}}{b_{{20}}}^{2}+7\,{b_{{30}}}^{2}b_{{20}}-2\,b_{{30}}{b_{{20}}
}^{3}-4\,a_{{31}}b_{{30}}+4\,a_{{11}}{b_{{30}}}^{2}+a_{{31}}{b_{{20}}}
^{2} \right) }{b_{{20}} \left( 4\,b_{{30}}-{b_{{20}}}^{2
} \right) }}{x}^{3}{y}^{2}

    \end{array}\right\} 
\end{equation*}

\medskip

\fbox{VI}
\begin{equation*}
 \left. \begin{array}{rl} \dot x&= -y+{\frac { \left( 108\,{b_{{40}}}^{2}-42\,b_{{40}}{b_{{20}}}^{3}+81\,
a_{{31}}b_{{40}}+{b_{{20}}}^{6}-3\,{b_{{20}}}^{3}a_{{31}} \right) }{
{b_{{20}}}^{2} \left( -{b_{{20}}}^{3}+27\,b_{{40}} \right) }}xy\\
&~\\
&+3\,{
\frac { \left( -3\,b_{{40}}{b_{{20}}}^{3}-{b_{{20}}}^{3}a_{{31}}+27\,a
_{{31}}b_{{40}}+36\,{b_{{40}}}^{2} \right) }{b_{{20}} \left( -
{b_{{20}}}^{3}+27\,b_{{40}} \right) }}{x}^{2}y+a_{{31}}{x}^{3}y\\
&~\\
&+3\,{\frac {b_{
{40}} \left( -{b_{{20}}}^{3}a_{{31}}+36\,{b_{{40}}}^{2}+27\,a_{{31}}b_
{{40}} \right) }{{b_{{20}}}^{2} \left( -{b_{{20}}}^{3}+27\,b_{
{40}} \right) }}{x}^{4}y

             \\
&~\\
 \dot y&=x+b_{{20}}{x}^{2}+3\,{\frac { \left( {b_{{20}}}^{6}-32\,b_{{40}}{b_{{
20}}}^{3}+36\,{b_{{40}}}^{2}+27\,a_{{31}}b_{{40}}-{b_{{20}}}^{3}a_{{31
}} \right) }{{b_{{20}}}^{2} \left( -{b_{{20}}}^{3}+27\,b_{{40}}
 \right) }}{y}^{2}\\
&~\\
&+\frac{1}{3}\,{b_{{20}}}^{2}{x}^{3}+6\,{\frac { \left( -4\,b_{{40}}
{b_{{20}}}^{3}-{b_{{20}}}^{3}a_{{31}}+27\,a_{{31}}b_{{40}}+36\,{b_{{40
}}}^{2} \right) }{b_{{20}} \left( -{b_{{20}}}^{3}+27\,b_{{40}}
 \right) }}x{y}^{2}\\
&~\\
&-3\,{\frac { \left( {b_{{20}}}^{3}a_{{31}}-27\,a_{{31}}b_{{
40}}+12\,{b_{{40}}}^{2} \right) }{-{b_{{20}}}^{3}+27\,b_
{{40}}}}{x}^{2}{y}^{2}\\
&~\\
&+b_{{40}}{x}^{4}+12\,{\frac {b_{{40}} \left( -{b_{{20}}}^{3}a_
{{31}}+36\,{b_{{40}}}^{2}+27\,a_{{31}}b_{{40}} \right) }
{{b_{{20}}}^{2} \left( -{b_{{20}}}^{3}+27\,b_{{40}} \right) }}{x}^{3}{y}^{2}

     \end{array}\right\} 
\end{equation*}
\medskip

\fbox{VII}
\begin{equation*}
 \left. \begin{array}{rl} \dot x&=-y+a_{{11}}xy-3\,{\frac {b_{{30}} \left( 13\,{b_{{40}}}^{2}+2\,{b_{{30
}}}^{3} \right) }{27\,{b_{{40}}}^{2}+4\,{b_{{30}}}^{3}}}{x}^{2}y\\
&~\\
&-{
\frac { \left( 5\,{b_{{30}}}^{3}b_{{40}}+36\,{b_{{40}}}^{3}-27\,a_{{11
}}b_{{30}}{b_{{40}}}^{2}-4\,a_{{11}}{b_{{30}}}^{4} \right) }{
27\,{b_{{40}}}^{2}+4\,{b_{{30}}}^{3}}}{x}^{3}y\\
&~\\
&+{\frac {b_{{40}} \left( b_{{40}
}{b_{{30}}}^{2}+27\,a_{{11}}{b_{{40}}}^{2}+4\,a_{{11}}{b_{{30}}}^{3}
 \right) }{27\,{b_{{40}}}^{2}+4\,{b_{{30}}}^{3}}}{x}^{4}y
             \\
&~\\
 \dot y&=x+a_{{11}}{y}^{2}+b_{{30}}{x}^{3}-6\,{\frac {b_{{30}} \left( 3\,{b_{{
30}}}^{3}+20\,{b_{{40}}}^{2} \right) }{27\,{b_{{40}}}^{2}+4\,{
b_{{30}}}^{3}}}x{y}^{2}\\
&~\\
&-3\,{\frac { \left( -27\,a_{{11}}b_{{30}}{b_{{40}}}^{2}
-4\,a_{{11}}{b_{{30}}}^{4}+7\,{b_{{30}}}^{3}b_{{40}}+48\,{b_{{40}}}^{3
} \right) }{27\,{b_{{40}}}^{2}+4\,{b_{{30}}}^{3}}}{x}^{2}{y}^{2}\\
&~\\
&+b_{{
40}}{x}^{4}+4\,{\frac {b_{{40}} \left( b_{{40}}{b_{{30}}}^{2}+27\,a_{{
11}}{b_{{40}}}^{2}+4\,a_{{11}}{b_{{30}}}^{3} \right) }{
27\,{b_{{40}}}^{2}+4\,{b_{{30}}}^{3}}}{x}^{3}{y}^{2}

     \end{array}\right\} 
\end{equation*}

\medskip

\fbox{VIII}
$$
 \left. \begin{split}\dot x&= - y+ a_{11}xy+ a_{21}x^2y + a_{31}x^3y+ a_{41}x^4y\\
 \dot y&= x + b_{20}x^2 + b_{30}x^3+ b_{02}y^2 + b_{12}xy^2 + b_{22}x^2y^2+ b_{32}x^3y^2+ b_{40}x^4+ b_{50}x^5 \end{split}\right\} $$
where
$$\;a_{{11}}=\frac{P_{11}}{Q},\;a_{{21}}=\frac{b_{20}P_{21}}{Q},a_{{31}}=\frac{P_{31}}{Q},\;a_{{41}}=\frac{P_{41}}{Q},
\;{b}_{{02}}=\frac{P_{02}}{Q},\;b_{{22}}=\frac{P_{22}}{Q},\;{b}_{{32}}=\frac{P_{32}}{Q}$$
 and
\begin{equation*}
 \left. \begin{array}{rl}
 
P_{{11}}&=-41\,b_{{40}}{b_{{30}}}^{2}b_{{20}}+2\,b_{{40}}b_{{12}}{b_{{
20}}}^{3}-9\,b_{{40}}b_{{30}}b_{{12}}b_{{20}}+9\,{b_{{30}}}^{4}-8\,b_{
{40}}{b_{{20}}}^{5}\\
&-50\,{b_{{40}}}^{2}{b_{{20}}}^{2}+{\frac {27}{2}}\,
b_{{12}}{b_{{40}}}^{2}+60\,{b_{{40}}}^{2}b_{{30}}-10\,{b_{{30}}}^{3}{b
_{{20}}}^{2}+2\,{b_{{30}}}^{2}{b_{{20}}}^{4}\\
&-\frac{1}{2}\,{b_{{30}}}^{2}{b_{{
20}}}^{2}b_{{12}}+44\,b_{{40}}b_{{30}}{b_{{20}}}^{3}+2\,b_{{12}}{b_{{
30}}}^{3}\\
&~\\ 
 
P_{{21}}&=-13\,b_{{40}}{b_{{30}}}^{2}b_{{20}}+2\,b_{{40}}b_{{12}}{b_{{
20}}}^{3}-9\,b_{{40}}b_{{30}}b_{{12}}b_{{20}}+3\,{b_{{30}}}^{4}-4\,{b_
{{40}}}^{2}{b_{{20}}}^{2}\\
&+{\frac {27}{2}}\,b_{{12}}{b_{{40}}}^{2}+21\,
{b_{{40}}}^{2}b_{{30}}-{b_{{30}}}^{3}{b_{{20}}}^{2}-\frac{1}{2}\,{b_{{30}}}^{2
}{b_{{20}}}^{2}b_{{12}}+4\,b_{{40}}b_{{30}}{b_{{20}}}^{3}+2\,b_{{12}}{
b_{{30}}}^{3}\\
&~\\ 
 
P_{{31}}&=9\,{b_{{30}}}^{5}+2\,{b_{{30}}}^{4}b_{{12}}-3\,{b_{{30}}}^{4}
{b_{{20}}}^{2}-46\,b_{{40}}b_{{20}}{b_{{30}}}^{3}-\frac{1}{2}\,{b_{{30}}}^{3}b
_{{12}}{b_{{20}}}^{2}\\

&+14\,{b_{{30}}}^{2}{b_{{20}}}^{3}b_{{40}}-9\,{b_{
{30}}}^{2}b_{{40}}b_{{12}}b_{{20}}+60\,{b_{{30}}}^{2}{b_{{40}}}^{2}+2
\,b_{{30}}b_{{40}}b_{{12}}{b_{{20}}}^{3}\\
&+{\frac {27}{2}}\,b_{{30}}b_{{
12}}{b_{{40}}}^{2}+20\,b_{{30}}{b_{{20}}}^{2}{b_{{40}}}^{2}-36\,{b_{{
40}}}^{3}b_{{20}}-8\,{b_{{20}}}^{4}{b_{{40}}}^{2}\\
&~\\

P_{{41}}&={\frac {27}{2}}\,b_{{12}}{b_{{40}}}^{3}+60\,{b_{{40}}}^{3}b_{
{30}}+2\,b_{{40}}b_{{12}}{b_{{30}}}^{3}+12\,{b_{{40}}}^{2}b_{{30}}{b_{
{20}}}^{3}-8\,{b_{{40}}}^{3}{b_{{20}}}^{2}\\
&-\frac{1}{2}\,b_{{40}}{b_{{30}}}^{2}
{b_{{20}}}^{2}b_{{12}}-9\,{b_{{40}}}^{2}b_{{30}}b_{{12}}b_{{20}}+9\,b_
{{40}}{b_{{30}}}^{4}+2\,{b_{{40}}}^{2}b_{{12}}{b_{{20}}}^{3}-3\,b_{{40
}}{b_{{30}}}^{3}{b_{{20}}}^{2}\\
&-40\,{b_{{40}}}^{2}{b_{{30}}}^{2}b_{{20}}\\
&~\\

P_{{02}}&=-16\,b_{{40}}{b_{{20}}}^{5}+80\,b_{{40}}b_{{30}}{b_{{20}}}^{3
}-104\,{b_{{40}}}^{2}{b_{{20}}}^{2}+4\,{b_{{30}}}^{2}{b_{{20}}}^{4}-18
\,{b_{{30}}}^{3}{b_{{20}}}^{2}\\
&-41\,b_{{40}}{b_{{30}}}^{2}b_{{20}}+2\,b
_{{40}}b_{{12}}{b_{{20}}}^{3}-9\,b_{{40}}b_{{30}}b_{{12}}b_{{20}}+9\,{
b_{{30}}}^{4}+{\frac {27}{2}}\,b_{{12}}{b_{{40}}}^{2}\\
&+60\,{b_{{40}}}^{2}b_{{30}}-\frac{1}{2}\,{b_{{30}}}^{2}{b_{{20}}}^{2}b_{{12}}+2\,b_{{12}}{b_{{
30}}}^{3}\\
&~\\

P_{{22}}&=180\,{b_{{30}}}^{2}{b_{{40}}}^{2}-144\,{b_{{40}}}^{3}b_{{20}}
-32\,{b_{{20}}}^{4}{b_{{40}}}^{2}+6\,b_{{30}}b_{{40}}b_{{12}}{b_{{20}}
}^{3}+27\,{b_{{30}}}^{5}+6\,{b_{{30}}}^{4}b_{{12}}\\
&-9\,{b_{{30}}}^{4}{b
_{{20}}}^{2}+{\frac {81}{2}}\,b_{{30}}b_{{12}}{b_{{40}}}^{2}-144\,b_{{
40}}b_{{20}}{b_{{30}}}^{3}+44\,{b_{{30}}}^{2}{b_{{20}}}^{3}b_{{40}}\\
&+88
\,b_{{30}}{b_{{20}}}^{2}{b_{{40}}}^{2}-27\,{b_{{30}}}^{2}b_{{40}}b_{{
12}}b_{{20}}-\frac{3}{2}\,{b_{{30}}}^{3}b_{{12}}{b_{{20}}}^{2}
\\
&~\\

P_{{32}}&=54\,b_{{12}}{b_{{40}}}^{3}+240\,{b_{{40}}}^{3}b_{{30}}+8\,b_{
{40}}b_{{12}}{b_{{30}}}^{3}+48\,{b_{{40}}}^{2}b_{{30}}{b_{{20}}}^{3}-
32\,{b_{{40}}}^{3}{b_{{20}}}^{2}\\
&-2\,b_{{40}}{b_{{30}}}^{2}{b_{{20}}}^{
2}b_{{12}}-36\,{b_{{40}}}^{2}b_{{30}}b_{{12}}b_{{20}}+36\,b_{{40}}{b_{
{30}}}^{4}+8\,{b_{{40}}}^{2}b_{{12}}{b_{{20}}}^{3}\\
&-12\,b_{{40}}{b_{{30
}}}^{3}{b_{{20}}}^{2}-160\,{b_{{40}}}^{2}{b_{{30}}}^{2}b_{{20}}\\
&~\\

Q&=4\,b_{{40}}{b_{{20}}}^{4}-18\,b_{{40}}b_{{30}}{b_{{20}}}^{2}+27\,{b_
{{40}}}^{2}b_{{20}}-{b_{{30}}}^{2}{b_{{20}}}^{3}+4\,b_{{20}}{b_{{30}}}
^{3}

\end{array}\right.
\end{equation*}
\end{theo}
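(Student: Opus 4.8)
The plan is to apply Corollary~A, which reduces the existence of an isochronous center with zero Urabe function to the single scalar identity \eqref{Null}, namely $g'(x)+f(x)g(x)=1$ for small $x$. For system~\eqref{C_5} the Case~1 reduction of Sec.~1.1 gives, through \eqref{GLa}, the data
\[
\tilde{A}(x)=1-a_{11}x-a_{21}x^2-a_{31}x^3-a_{41}x^4,\qquad
\tilde{B}(x)=x+b_{20}x^2+b_{30}x^3+b_{40}x^4+b_{50}x^5,
\]
\[
\tilde{C}(x)=b_{02}+b_{12}x+b_{22}x^2+b_{32}x^3,
\]
with $f=-(\tilde{A}'-\tilde{C})/\tilde{A}$ and $g=\tilde{A}\tilde{B}$. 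The first thing I would record is that the seemingly rational expression $g'+fg$ is in fact a polynomial: since $g'=\tilde{A}'\tilde{B}+\tilde{A}\tilde{B}'$ and $fg=-(\tilde{A}'-\tilde{C})\tilde{B}$, the terms $\tilde{A}'\tilde{B}$ cancel and $g'+fg=\tilde{A}\,\tilde{B}'+\tilde{C}\,\tilde{B}$. Thus \eqref{Null} is equivalent to the denominator-free polynomial identity $\tilde{A}(x)\tilde{B}'(x)+\tilde{C}(x)\tilde{B}(x)\equiv 1$. By Theorem~B the origin is automatically a center (since $xg(x)>0$ for small $x$), so isochronicity with $h=0$ is governed entirely by this identity.

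Next I would expand $\tilde{A}\tilde{B}'+\tilde{C}\tilde{B}$ and match coefficients of each power of $x$. The constant term equals $\tilde{A}(0)\tilde{B}'(0)+\tilde{C}(0)\tilde{B}(0)=1$ automatically, while the coefficients of $x^1,\dots,x^8$ must vanish; this produces exactly the system of $8$ polynomial equations in the $12$ parameters announced just before the theorem. The lowest one is linear, $2b_{20}-a_{11}+b_{02}=0$, which I would use to eliminate one unknown. Imposing the hypothesis $b_{50}=0$ lowers the degree of $\tilde{B}$ and substantially simplifies the remaining equations, as noted in the text.

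The core of the argument is then to describe the complete real solution set of this polynomial system. I would compute a Gr\"obner basis (DRL order, via FGb) of the ideal generated by these equations and decompose the associated variety into its irreducible components; each component, after reinserting the eliminated variable and parametrizing by the surviving free parameters, yields precisely one of the families I--VIII. The rational-looking branches arise from components on which all parameters are rational functions of a few free ones, while the appearance of $\sqrt{33}$, $\sqrt{3297}$ and the cube roots in cases V--VIII signals components cut out by genuinely higher-degree relations whose parametrizations require algebraic numbers; in case VIII this is exactly the locus where $a_{11},a_{21},a_{31},a_{41},b_{02},b_{22},b_{32}$ are the displayed rational functions of $b_{20},b_{30},b_{12},b_{40}$ over the denominator $Q$.

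The converse (sufficiency) direction is a direct check: for each of the eight listed systems I would read off $\tilde{A},\tilde{B},\tilde{C}$, form $\tilde{A}\tilde{B}'+\tilde{C}\tilde{B}$, and verify it equals $1$ identically, so that Corollary~A guarantees an isochronous center with $h=0$. I expect the main obstacle to lie in the completeness of the decomposition rather than in any individual verification: one must ensure that the elimination step captures every component of the variety, that no branch is lost when clearing the denominators (the loci $Q=0$ and the various $b_{ij}=0$), and that the degenerate solutions on which a denominator vanishes are correctly discarded under the standing hypothesis that all denominators are nonzero. Controlling these genericity conditions, and organizing the algebraic-number branches into the closed forms displayed, is where the real work resides.
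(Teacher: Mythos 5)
Your approach coincides with the paper's own: the authors likewise invoke Corollary~A to replace zero-Urabe isochronicity by the identity $g'(x)+f(x)g(x)=1$, note that for system~\eqref{C_5} this is equivalent to a system of $8$ polynomial equations (one linear, seven quadratic) in the $12$ parameters, set $b_{50}=0$ precisely because the Gr\"obner basis of the full system ($90$ polynomials) is intractable, and then extract the eight families by Gr\"obner-basis computation, sufficiency being the same direct verification of the identity that you describe --- indeed the paper prints no proof beyond this outline, so your version (with the explicit cancellation $g'+fg=\tilde{A}\tilde{B}'+\tilde{C}\tilde{B}$ and the linear equation $2b_{20}-a_{11}+b_{02}=0$) is if anything more detailed. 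One factual slip, harmless to the method: in the present theorem none of the eight cases involves algebraic irrationalities (all are rational in the surviving free parameters, case VIII being rational over the denominator $Q$), so your remark about $\sqrt{33}$, $\sqrt{3297}$ and cube roots is misplaced --- those occur in the degree-four theorems of Sections 3 and 4, not here.
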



\section{Abel polynomial system}

By planar Abel system of order $n$ we mean the system
\begin{equation}\label{Ab}
\left. \begin{split}
\dot x&=-y\\ \displaystyle\dot y&=\displaystyle\sum_{k=0}^{n} P_k (x){y}^k
\end{split}\right\}
\end{equation}  
where  $\{P_k(x)\}_{0\leq k\leq n}$ are smooth functions.

 This section is concerned by the following Abel system

\begin{equation}\label{I}
\left.\begin{split} 
\dot x&=-y\\
 \dot y&=x(1+P(y)),
      \end{split}\right\} \end{equation}
with $ P(y)=a_1 y + a_2 y^2+a_3 y^3+....+a_n y^n$, $a_k \in \R$, for $k =0,\ldots, n$.
This is a particular Abel system~\eqref{Ab} where $P_k(x):=a_kx$, ${0\leq k\leq n}$.
 \subsection{Characterization of isochronous centers}
 
System~\eqref{I} is reducible (see Sec.1.1, Case 2) to the Liénard type equation \eqref{L2} with $f(x)$ and $g(x)$
defined by \eqref{GLb}.
 Definitions~\eqref{xixi}, \eqref{xi} and Theorems B and C from Sec.1.1 remain valid. Applied to the Abel system~\eqref{I} they give :

\begin{theo}\label{ABEL} The origin $O$ is a center for the system~\eqref{I}.

The center at $O$, is isochronous if and only if  there exists an odd function $h$ defined in some neighborhood of $0\in \R$ 
which satisfies the following conditions
\begin{equation*}
\frac {X}{1+h(X)} = x,
\end{equation*} 

\begin{equation*}\phi(x)= X(x) + \int_0^{X(x)} h(t) dt\end{equation*} 
and  $X(x)\phi(x) > 0$ for $x\neq 0$.\\
 
In particular, when $P$ is an even polynomial then the origin $O$ is an isochronous center if and only if $P=0$.
 \end{theo}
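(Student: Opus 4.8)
The plan is to obtain the first two assertions as direct specializations of Theorems~B and~C to the functions $f$ and $g$ attached to system~\eqref{I} by the Case~2 reduction, and then to settle the last assertion by a parity argument combined with Corollary~A.

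First I would record the elementary computation of the auxiliary functions. Since $f(x)=-P'(x)/(1+P(x))$ by~\eqref{GLb} and $P(0)=0$, integrating gives $F(x)=-\ln(1+P(x))$, hence $e^{F(x)}=1/(1+P(x))$. With $g(x)=x(1+P(x))$ this yields the crucial simplification
\begin{equation*}
g(x)\,e^{F(x)}=x .
\end{equation*}
Because $xg(x)=x^{2}(1+P(x))>0$ for small $x\neq0$, Theorem~B gives that $O$ is a center. Substituting $g(x)e^{F(x)}=x$ into the isochronicity criterion~\eqref{CRI} of Theorem~C turns it into $X/(1+h(X))=x$, while the remaining conditions~\eqref{bb} and $X(x)\phi(x)>0$ are transcribed verbatim; this establishes the stated characterization of the isochronous center.

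For the final assertion the key observation is a parity one: if $P$ is even then $P'$ is odd, so both $g(x)=x(1+P(x))$ and $f(x)=-P'(x)/(1+P(x))$ are odd functions. I would then invoke the last sentence of Theorem~C, which asserts that for odd $f$ and $g$ the center is isochronous if and only if the Urabe function vanishes, and combine it with Corollary~A, by which $h\equiv0$ is equivalent to the identity $g'(x)+g(x)f(x)=1$ of~\eqref{Null}. A short computation gives
\begin{equation*}
g'(x)+g(x)f(x)=\bigl(1+P(x)+xP'(x)\bigr)-xP'(x)=1+P(x),
\end{equation*}
so the isochronicity identity reduces to $P(x)\equiv0$. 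The converse is immediate, since $P=0$ returns the linear center~\eqref{LINC}.

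I do not anticipate a genuine obstacle, as the theorem is essentially a transcription of the general machinery developed above; the only point demanding care is the parity bookkeeping that makes $f$ and $g$ odd and thereby licenses the ``odd'' clause of Theorem~C. One should also state explicitly that every conclusion is local, valid for sufficiently small $|x|$, so that $1+P(x)\neq0$ and the sign and parity conclusions are legitimate.
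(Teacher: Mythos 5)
Your proposal is correct, and for the first two assertions it follows the paper's proof exactly: you compute $F(x)=-\ln(1+P(x))$, observe the key simplification $g(x)e^{F(x)}=x$, invoke Theorem~B (via $xg(x)=x^2(1+P(x))>0$ for small $x\neq 0$) for the center, and then read off the characterization by substituting into \eqref{CRI} and \eqref{bb} of Theorem~C. The only genuine divergence is in the final assertion about even $P$. The paper, after making the same parity observation (odd $f$ and $g$, hence $h=0$ by the last clause of Theorem~C), deduces $X(x)=x$ from the condition $X/(1+h(X))=x$, then $\phi(x)=X(x)=x$ from \eqref{bb}, and concludes $P\equiv 0$ from the explicit formula $\phi(x)=\int_0^x \frac{ds}{1+P(s)}$ (equation \eqref{P0}). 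You instead route the argument through Corollary~A: isochronicity with $h=0$ is equivalent to the identity \eqref{Null}, and your computation $g'(x)+g(x)f(x)=1+P(x)$ finishes the job. Both arguments are sound and of comparable length; yours reuses the general criterion \eqref{Null} that the paper exploits elsewhere (Sections 2, 3 and 5), whereas the paper's version stays self-contained, using only the functions $\phi$ and $X$ already computed in the proof. Your closing remark about locality (all identities hold for $|x|$ small, and since $P$ is a polynomial this forces $P\equiv0$ globally) is the right point of care and is implicit in the paper's reasoning as well.
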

 
\begin{proof}
$xg(x)=x^2(1+P(x))>0$ for $x\neq0$ and $|x|$ small enougth. Then Theorem B implies that the origin $O$ is a center of the system~\eqref{I}.

Now
     \begin{equation*}
F(x) = \int_0^x f(s) ds=-\ln(1+P(x)),\end{equation*}
thus
 \begin{equation}\label{P0}
  \phi(x)=\int_0^x e^{F(s)} ds=\int_0^x\frac{ds}{1+P(s)} \end{equation}
   Then we obtain
   \begin{equation*}  g(x) e^{F(x)}=x(1+P(x))e^{-\ln(1+P(x))}=x\end{equation*}  
  Following Theorem~C, one obtains
   $$ \frac {X(x)}{1+h(X(x))}=x$$
  as well as the identity \eqref{bb}.
    
   For the particular case where $P$ is even, it is easy to see that $f$ and $g$ are odd.
   Theorem~C thus implies $h=0$ and consequently $X(x)=x$. From \eqref{bb} one deduces that $\phi(x)=X(x)$. Then \eqref{P0} implies that $P\equiv0$.
   \end{proof}

The following paragraph is devoted to illustrate the last theorem by example.

\subsection{An application}
  Let  us consider the Abel system~\eqref{I} with $n=9$ :
\begin{equation}\label{AB9}
  \left. \begin{split}
\dot x&=-y\\ \dot y&=x+ \sum_{i=1}^9a_i x y^i
    \end{split}\right\}\end{equation}
with  $ a_k \in\mathbb R$, $1\leq k \leq 9$.
As follows from Theorem~\ref{ABEL},  the origin $O$ is always a center for~\eqref{AB9}.

\begin{theo}\label{ISL}
The system \eqref{AB9} has an isochronous center at the origin $0$ only in the case 

 \begin{equation}\label{I3}
\left. \begin{split}
\dot x&=-y\\ \dot y&=x+ a xy+ \frac{a^2}{3} x{y}^2+ \frac{a^3}{27} x{y}^3
    \end{split}\right\} \end{equation} 
    where $a\in \R$
  
\end{theo}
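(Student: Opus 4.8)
The plan is to apply Theorem~\ref{ABEL} directly, since it already reduces isochronicity of system~\eqref{AB9} to the existence of an odd function $h$ satisfying the two Urabe-type conditions together with the sign condition $X(x)\phi(x)>0$. For system~\eqref{AB9} we have $P(y)=\sum_{i=1}^9 a_i y^i$, and by the proof of Theorem~\ref{ABEL} the first relation collapses to $X(x)/(1+h(X(x)))=x$, while $\phi(x)=\int_0^x ds/(1+P(s))$ is an explicit rational-integral quantity. The strategy is therefore to work with the \emph{Taylor coefficients}: write $h(X)=\sum_{j\geq 0}c_{2j+1}X^{2j+1}$ (odd, as required) and expand both sides of~\eqref{bb} in powers of $x$. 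First I would invert the relation $X/(1+h(X))=x$ to express $X$ as a power series in $x$, then substitute into $\phi(x)=X(x)+\int_0^{X(x)}h(t)\,dt$, and match coefficients order by order against the known expansion of $\phi$ coming from $1/(1+P(x))$.

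The main computational engine is the C-algorithm (or equivalently the coefficient-matching above), which produces a finite system of polynomial equations in the unknowns $a_1,\dots,a_9$ and the finitely many relevant Urabe coefficients $c_1,c_3,\dots$. Because $P$ is a degree-$9$ polynomial, only finitely many Taylor orders are needed to force the constraints, and one runs the algorithm until the ideal of isochronicity conditions stabilizes. I would then compute a Gr\"obner basis of this polynomial system (the paper uses \emph{FGb} with DRL order for exactly such tasks) and solve it. The expected outcome is that every branch of the solution variety forces the even-indexed coefficients $a_2,a_4,\dots$ and the high odd ones $a_5,a_7,a_9$ to vanish, and pins down $a_2=a^2/3$-type relations; concretely the surviving solution is $a_1=a$, $a_2=a^2/3$, $a_3=a^3/27$, and $a_4=\cdots=a_9=0$, which is precisely system~\eqref{I3}.

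Once the candidate~\eqref{I3} is isolated, verifying sufficiency is the clean part: for that system one checks that there is indeed an odd Urabe function $h$ (of the anticipated form $h(X)=k_1X^{s}/\sqrt{k_2+k_3X^{2s}}$) satisfying~\eqref{CRI} and~\eqref{bb}, exactly as done by hand in the proof of Theorem~\ref{HUN}. Since here $P(y)=ay+\tfrac{a^2}{3}y^2+\tfrac{a^3}{27}y^3=a y(1+\tfrac{a}{3}y)^2/\big(1+\tfrac{a}{3}y\big)$ has a convenient factored structure, I expect $g(x)e^{F(x)}=x$ to combine with an explicit elementary $\phi(x)$ so that both Urabe conditions reduce to an identity one can confirm directly, and the sign condition $X(x)\phi(x)>0$ holds for small $|x|$.

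The hard part will be the necessity direction, i.e.\ proving that no \emph{other} choice of $(a_1,\dots,a_9)$ yields an isochronous center. This is where the algebra is genuinely heavy: the system of isochronicity polynomials in nine parameters is large, and the real obstacle is showing that the Gr\"obner basis computation terminates within available computer facilities and that one correctly extracts \emph{all} real solution branches (discarding spurious complex or denominator-vanishing components). As the authors remark elsewhere, such computations for degree-$4$ and degree-$5$ perturbations already strain their hardware; the saving grace for~\eqref{AB9} is that the reduction in Theorem~\ref{ABEL} gives the especially simple closed form $X/(1+h(X))=x$, which keeps the resulting ideal tractable enough to conclude that~\eqref{I3} is the unique isochronous case.
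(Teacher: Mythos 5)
Your proposal follows essentially the same route as the paper: the authors likewise run the C-algorithm on $f(x)=-P'(x)/(1+P(x))$, $g(x)=x(1+P(x))$ with $P(x)=\sum_{i=1}^{9}a_ix^i$, solve the resulting polynomial conditions to isolate the unique family \eqref{I3}, and confirm sufficiency via the explicit odd Urabe function $h(X)=-a_1X/3$ (your anticipated form with $s=1$, $k_2=1$, $k_3=0$). One small slip worth fixing: your factorization of $P(y)$ is incorrect --- the relevant identity is $1+P(y)=\bigl(1+\tfrac{a}{3}y\bigr)^3$, which after rescaling yields the system $\dot x=-y$, $\dot y=x(1+y)^3$ treated in the rest of that section.
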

\begin{proof}
We apply C-algorithm  for
\begin{equation*}
f(x)=-\frac{P'(x)}{1+P(x)}\, \hbox{and}\, g(x)=x(1+P(x)),
     \end{equation*} 
where $P(x)=\sum_{i=1}^9a_i x^i$.
We obtain the unique one-parameter family~\eqref{I3}, and computations give the Urabe function 
    \begin{equation*} h(X) = -\frac{a_1X}{3} =\frac{k_1 X}{\sqrt{{k_2}^2+k_3 X^2}}
    \end{equation*}
  with $k_1=-a_1/3,\; k_2=1,\; k_3=0$.
  
\end{proof}
 By the evident rescalling $\frac{a}{3}x\mapsto x$ and $\frac{a}{3}y\mapsto y$ system~\eqref{I3} takes the form
which is a particular case of system \eqref{I}:
 \begin{equation}\label{Ab3}\left.\begin{split}
\dot x&=-y\\ \dot y &=x(1+y)^3.
    \end{split}\right\}  \end{equation}
    The isochronous center at the origin $O$ for system~\eqref{Ab3} was already depicted in \cite{VI+} by showing that system~\eqref{Ab3} commutes with some transversal polynomial system, but neither its first integral nor the linearizing change of coordinates were provided. We shall now compute both of them.%


\begin{itemize}
\item {\underline{\bf{First integral}}}
In the variables $u=y$ and $v=x(1+y)^3$ the system \eqref{Ab3} is reducible to the Liénard type equation $\ddot u+f(u){\dot u}+g(u)=0$ where 
$$f(u)=-\frac{3}{1+u}  ~and  ~g(u)=u(1+u)^3.$$ 
By formula \eqref{L2FI} from Theorem~B one easily obtains that 
$I(u,v)=\frac{u^2}{(1+u)^2}+\frac{v^2}{(1+u)^6}$ is a first integral of the corresponding planar system
\begin{equation*}\left.\begin{split}
\dot u&=v\\ \dot v &=-g(u)-f(u)v^2
    \end{split}\right\}  \end{equation*}
    
    Returning to the variables $(x,y)$ one recovers the first integral of the system~\eqref{Ab3} :
    \begin{equation*}
I_{\eqref{Ab3}}(x,y)=x^2+\frac{y^2}{(1+y)^2}.
\end{equation*}
\item {\underline{\bf{Linearization}}}
For this purpose we use the method of~\cite{GARC} based on the exitence of vector field $Y$ transversal to the vector field defined by the system \eqref{Ab3} and commuting with it.
{\it{Maple}} computations give such a field $Y$:
\begin{equation}\label{ComAb3}\left.\begin{split}
\dot x&=x+xy\\ \dot y &=-{x}^{2}{y}^{3}+{y}^{2}-3\,{x}^{2}{y}^{2}+y-3\,{x}^{2}y-{x}^{2}
    \end{split}\right\}  \end{equation}
    
    Following the method described in \cite{GARC}, we first establish an inverse integrating factor $V(x,y)$ of the system~\eqref{Ab3}:

    $$V(x,y)=- \left( y+1 \right)  \left( {x}^{2}+2\,{x}^{2}y+{x}^{2}{y}^{2}+{y}^{2} \right) $$
 which leads to the first integrals $H$ (already known) and $I$ of the systems \eqref{Ab3} and \eqref{ComAb3} respectively:
 \begin{equation*}
  \left.\begin{split}
H(x,y)&=x^2+\frac{y^2}{(1+y)^2}\\ I(x,y)&=-x+\arctan \left( {\frac { \left( y+1 \right) x}{y}} \right)
    \end{split}\right\}  \end{equation*}
Let us define $\tilde{f}(z)=z$ and $\tilde{g}(z)=tan(z)$.

 By the Theorem~4 of~\cite{GARC} we obtain the linearizing change of coordinates 
 \begin{equation*}
  \left.\begin{split}
u&=\frac{\sqrt{\tilde{f}(H(x,y))}\tilde{g}(I(x,y))}{\sqrt{1+\tilde{g}^2(I(x,y))}}\\
&~\\
 v&=\frac{\sqrt{\tilde{f}(H(x,y))}}{\sqrt{1+\tilde{g}^2(I(x,y))}}
    \end{split}\right\}  \end{equation*}
   {\it{Maple}} produces the following more explicit formulas that we, as usual, reproduce without any change to avoid the misprints:
$$ \left.\begin{split}
u(x,y)&=\frac{-\sqrt {{\frac {{x}^{2}+2\,y{x}^{2}+{y}^{2}{x}^{2}+{y}^{2}}{ \left( y+
1 \right) ^{2}}}}\tan \left( x-\arctan \left( {\frac { \left( y+1
 \right) x}{y}} \right)  \right)} {{\sqrt {1+ \left( \tan
 \left( x-\arctan \left( {\frac { \left( y+1 \right) x}{y}} \right) 
 \right)  \right) ^{2}}}}\\ 
&~\\
v(x,y)&=\sqrt {{\frac {{x}^{2}+2\,y{x}^{2}+{y}^{2}{x}^{2}+{y}^{2}}{ \left( y+1
 \right) ^{2}}}}{\frac {1}{\sqrt {1+ \left( \tan \left( x-\arctan
 \left( {\frac { \left( y+1 \right) x}{y}} \right)  \right)  \right) ^
{2}}}}
.
\end{split}\right\}$$  
    The fact that this change of variables actually is a linearizing one can easily be verified by {\it{Maple}} which gives
$\dot u=-v,\; \dot v=u$ as expected.
\end{itemize}
  
Under the light of Theorem~\ref{ISL},  it is natural to ask if the system~\eqref{I3} is the unique system with isochronous center at the origin 0  in  family~\eqref{I}. Even for 
$n=10$, our actual computer possibilies are not sufficient to give an answer.
  

\section{Appendix}
{
\subsection{C-Algorithm }
Theorem~C (see Sec.1.1) leads to an algorithm, first introduced  in \cite{C3+} (see also\cite{CRC+}), hereafter called C-algorithm, 
which gives necessary conditions for isochronicity
of the center at the origin $O$  for equation~\eqref{L2}.

Below we recall basic steps of the algorithm. 

Let $h$ be the Urabe function defined in the Theorem~C,  and $u ={\phi (x)}$.  The function $\phi$ is invertible around 0 .
\begin{equation}
\label{tg}
	\tilde{g}(u) :=\frac{X}{1+h(X)},
\end{equation}
where now $X$ is considered as a function of $u$. Our further assumption is that  functions $f(x)$ and $g(x)$ depend polynomially on certain parameters $\alpha:=(\alpha_1,\ldots,\alpha_p)\in\R^p$.

By Theorem~C, if the system~\eqref{L2} has isochronous center at the origin $O$, then the Urabe function $h$  must be odd, so we have 
$$h(X)=\sum_{k=0}^{\infty}{c_{2k+1}} X^{2k+1}$$
and moreover, 
\begin{equation}
\label{tgg}
\tilde{g}(u) = g(x)e^{F(x)}, \quad\text{where}\quad x=\phi^{-1}(u).	
\end{equation}
Hence, the right hand sides of~\eqref{tg} and~\eqref{tgg} must be equal. We expand both right hand sides into the Taylor series around 0 and equate the corresponding coefficients. To this end we need to calculate $k$-th derivatives of~\eqref{tg} and~\eqref{tgg}.
  
For~\eqref{tg}, by straightforward differentiation, we have 
$$\frac{d^k\tilde{g}(u)}{du^k} =\frac{d}{dX}\left(\frac{d^{k-1}\tilde{g}(u)}{du^{k-1}} \right) \frac{dX}{du}$$
Using induction, one can show that for~\eqref{tgg} one obtains 
$$\frac{d^k\tilde{g}(u)}{du^k} = e^{(1-k)F(x)}S_k(x),$$
where $S_k(x)$ is a function of $f(x), g(x)$ and their derivatives.

Therefore to compute the first $m$ conditions for isochronicity of system~\eqref{L2} we proceed as follows.
\begin{enumerate}
	\item We fix $m$ and write 
	\[
	h(X) =\sum_{k=1}^{m}{c_{2k-1}} X^{2k-1}+O(X^{2m}), \quad c:=(c_1, c_3, \ldots, c_{2m-1}).
\]
\item Next, we compute 

	\[
	v_k :=\frac{d^k\tilde{g}}{du^k}(0), \quad w_k= S_k(0)
\]
 for $k = 1,\ldots, 2m + 1$. Note that those quantities are polynomials in $\alpha$ and $c$.
 \item By Theorem~C we obtain the equations $v_k = w_k$ for $k = 1,\ldots, 2m + 1$.  Let us note that always $v_1=w_1=1$ and thus the first equation is meaningless.
 
 It appears that we always can eliminate parameters $c$ from these equations. 
 For every $k\geq 0$, $c_{2k+1} $ occours for the first time, and in a linear way, in the equation $v_{2k+2}=w_{2k+2}$. This leads to the formula $c_{2k+1}=\varphi_{2k+1}(\alpha)$ for some multivariate polynomial $\varphi_{2k+1}$. This brings us in a natural way to the consecutive elimination of $c_1, c_3, \ldots, c_{2m-1}$. Finally, we obtain at most $m$ polynomial equations $s_1 = s_2 = s_3 = \ldots= s_M = 0$ with $p$ unknowns $\alpha_i$. These equations denoted $Sys(m)$ give $M$ necessary conditions for isochronicity of system~\eqref{L2}; $ Sys(m)\subset Sys(m+1).$
\end{enumerate} For more details see \cite{C3+,CRC+}.

The progammation of the C-algorithm can be done in different ways. Some of them appear more successful than others. For the purposes of the present paper we used the programm from \cite{BB}.
\subsection{Homogeneization and reduction}
The Gr\"obner bases method for solving the systems of polynomial equations is particularly efficient when all these polynomials are homogeneous or weighted-homogeneous (see \cite{BW+}, Sec.10.2).

A long experience with $C$-algorithm indicates that the following facts are always verified although they are not proved.

Let us consider the system \eqref{GL} written explicitly as
 \begin{equation}\label{C_nn} \qquad \left. \begin{array}{rl} \dot x&= - y+ a_{11}xy+\ldots+ a_{n-1,1}x^{n-1}y\\
 \dot y&= x + b_{20}x^2 +b_{02}y^2 +\ldots+ b_{n-2,2}x^{n-2}y^2+ b_{n,0}x^n\end{array}\right\} 
\end{equation}
which as explained in Introduction, is reducible to the equation~\eqref{L2} 

When considering the particular cases of system \eqref{C_nn} corresponding to homogeneous (resp. non-homogeneous) perturbations of system \eqref{LINC}, the polynomials from $Sys(m)$ are homogeneous (resp. non-homogeneous).\\ 

We note that for $n=3$, C-algorithm succeeds in establishing isochronicity criteria, however for $n=4$ the obtained polynomials from the algorithm are much more involved.
For instance, system~\eqref{C_nn} with $n=4$ reduces to the system \eqref{C_4}.
Now, the first two non-zero  polynomials from $Sys(9)$ are
\begin{equation}
\label{p3}
P_2=3\,a_{{21}}-3\,b_{{12}}+{a_{{11}}}^{2}-b_{{20}}a_{{11}}-9\,b_{{30
}}+4\,{b_{{02}}}^{2}-5\,a_{{11}}b_{{02}}+10\,{b_{{20}}}^{2}+10\,b_
{{20}}b_{{02}}, 
\end{equation}
\begin{equation*}
\begin{split}
P_3&=72\,{a_{{21}}}^{2}+396\,b_{{20}}a_{{11}}b_{{12}}+90\,a_{{11}}b_{{0
2}}b_{{12}}+36\,a_{{11}}b_{{22}}+324\,a_{{31}}b_{{02}}\\
  & -36\,a_{{2
1}}b_{{12}}-468\,b_{{20}}a_{{11}}a_{{21}}+612\,b_{{20}}a_{{21}}
b_{{02}}-4116\,a_{{11}}{b_{{20}}}^{2}b_{{02}}\\
& +108\,b_{{20}}a_{{31}}
 -540\,b_{{30}}a_{{21}}-324\,b_{{40}}a_{{11}}+1566\,b_{{30}}a_{
{11}}b_{{02}}-288\,b_{{20}}b_{{22}}\\
&-459\,b_{{30}}{a_{{11}}}^{2}-
1296\,b_{{40}}b_{{02}}-306\,a_{{21}}a_{{11}}b_{{02}}+1428\,b_{{20
}}{a_{{11}}}^{2}b_{{02}}\\
&+153\,a_{{21}}{a_{{11}}}^{2}-117\,{a_{{11
}}}^{2}b_{{12}}-191\,b_{{20}}{a_{{11}}}^{3}+180
\,b_{{20}}b_{{02}}b_{{12}}
+43\,{a_{{11}}}^{4}\\
&-2319\,b_{{20}}a_{{11}}{b_{{02}}}^{2}-
289\,{a_{{11}}}^{3}b_{{02}}-360\,b_{{02}}b_{{22}}-36\,{b_{{12}}}^
{2}-171\,a_{{21}}{b_{{02}}}^{2}\\
&+513\,b_{{30}}{b_{{02}}}^{2}+537\,{
a_{{11}}}^{2}{b_{{02}}}^{2}+351\,{b_{{02}}}^{2}b_{{12}}-271\,a_{{1
1}}{b_{{02}}}^{3}+542\,b_{{20}}{b_{{02}}}^{3}\\
&+756\,b_{{20}}b_{{30}}b_{{02}}+2268\,b_{{20}}b_{{30}}a_{{11}}-20\,{b_{{02}}}^{4}+1120
\,{b_{{20}}}^{4}+798\,{a_{{11}}}^{2}{b_{{20}}}^{2}\\
&-2240\,a_{{11}}{
b_{{20}}}^{3}-1512\,b_{{20}}b_{{40}}+1008\,{b_{{20}}}^{2}a_{{21}}
-252\,{b_{{20}}}^{2}b_{{12}}\\
&+1806\,{b_{{20}}}^{2}{b_{{02}}}^{2}+2240\,{b_{{20}}}^{3}b_{{02}}
\end{split}
\end{equation*}
To solve the systems of polynomials equations we use Gr\"obner bases. Solving system of 9 (non-zero) equations of $Sys(9)$ requires higher performance computers and ours are not up to it.

 A careful analysis of polynomials from $Sys(m)$  shows that for any $m$ they always are weighted-homogeneous. 
For example, the polynomial $P_2$ given by~\eqref{p3} is weighted-homogeneous if we give weight 2 for $a_{21}$, $b_{12}$ and $b_{30}$, and weight 1 for the remaining variables. 

We observe that all polynomials from $Sys(m)$ are weighted-homogeneous if we choose the following weights 
\begin{enumerate}
	\item $i+j-1$ for parameters $a_{ij}$ and $b_{ij}$
	\item $2i+1$ for $c_{2i+1}$.
\end{enumerate}
We introduce new parameters $A_{ij}$, $B_{ij}$, and $C_{2i+1}$ putting 
\begin{equation}
\label{ABC}
	A_{ij}^{i+j-1}=a_{ij}\quad B_{ij}^{i+j-1}=b_{ij}, \quad C_{2i+1}^{2i+1}=c_{2i+1}
\end{equation}
After this reparametrization system \eqref{C_nn} reads
\begin{equation}\label{C_nh}
 \left. \begin{split} \dot x&= - y+ A_{11}xy+\ldots+ A_{n-1,1}^{n-1}x^{n-1}y\\
 \dot y&= x + B_{20}x^2 +B_{02}y^2 +\ldots+ B_{n-2,2}^{n-1}x^{n-2}y^2+ B_{n,0}^{n-1}x^n\end{split}\right\} 
\end{equation}
As in the case of isochronous center the Urabe function is odd, we search it under the form 
$$h(X)=\sum_{k=0}^{\infty}C_{2k+1}^{2k+1} X^{2k+1} = C_1 X + C_3^{3} X^3 +C_5^{5}X^5 +C_7^{7} X^7 +\ldots$$ 
By a simply use of~\eqref{ABC}, from the isochronicity conditions for system \eqref{C_nh}, expressed in terms of its parameters 
$\left\{A_{ij}\right\}$ and $\left\{B_{rs}\right\}$, it is easy to recover the parameters values 
$\left\{a_{ij}\right\}$ and $\left\{b_{rs}\right\}$ when system \eqref{C_nn} admits isochronous centers at the origin $O$.
 
The described reparametrization gives rise to homogeneous equations and reduces  the number of parameters appearing in~\eqref{C_nh} by one. First we assume  ${B_{20}}=0$, and we solve the isochronicity problem for system~\eqref{C_nh} under this assumption.
Next, for ${B_{20}}\neq0$, we apply to system~\eqref{C_nh} the following change of coordinates
$$
 (x,y) \mapsto(\frac{x}{ B_{20}},\frac{y}{ B_{20}})
$$
Where
$$
\left. 
\begin{array}{rl} \dot x &= - y +\left(\frac{A_{11}}{B_{20}}\right)xy +..+ \left(\frac{B_{n-1,1}}{B_{20}}\right)^{n-1}x^{n-1}y\\
&~\\
 \dot y &= x + x^2 +\left( \frac{B_{02}}{B_{20}}\right)y^2  + ..+\left(\frac{B_{n-2,2}}{B_{2,0}}\right)^{n-1}x^{n-2}y^2+\left(\frac{B_{n,0}}{B_{20}}\right)^{n-1}x^n
 \end{array}\right\}
$$
Hence, without loss of generality  we can put ${B_{20}}=1$, and find the parameters values for which the center is  isochronous. \\

Note that the resolution of the polynomial system issued from the $19$ derivations and associated eliminations for  system~\eqref{C_4} (with 9 parameters), exceed our computer facilities.

\subsection{On efficiency of C-algorithm}
Usually for the search of isochronous centers one uses the method of normal form (see for exemple~\cite{ROSH,ROMA4}). Thus it is interesting to compare C-algorithm with it, when there are applied to systems reducible to Liénard type equations.

As an example for such investigations we choose the Abel systems~\eqref{I} with $2\leq n\leq 9$. The normal form (NF) algorithm used is the one described in~\cite{YU} which is universal and efficient.
As explained in point (3) of Sec.7.1 for the system~\eqref{I} with $n$ parameters one computes 
coefficients $c_1, c_3, \ldots, c_{2m-1}$
of the Urabe function when C-algorithm is used. When the normal form method is used one computes its first $2n+1$ terms.

The results are presented in the table where the time unit is one second.
\newpage

\begin{table}[ht]
\caption{CPU time on Pentium 1,46 GHz}
\begin{center}
\begin{tabular}{|c|c|c|}
\hline\hline
$n$ & C-algorithm & NF algorithm\\
\hline\hline
2 & $\sim$ 0 & 0,060 \\
\hline\hline
3 & 0,001 & 0,160 \\
\hline\hline
4 & 0,004 & 0,784 \\
\hline\hline
5 & 0,008 & 4,728 \\
\hline\hline
6 & 0,016 & 31,430 \\
\hline\hline
7 & 0,052 & 263,033 \\
\hline\hline
8 & 0,116 & 2335,962 \\
\hline\hline
9 & 0,284 & ~\\
\hline
\end{tabular}
\end{center}
\end{table}

\noindent The superiority of C-algorithm is obvious.
}


 \bigskip
 \noindent{\large{{\bf{Acknowledgments}}}} 
\medskip

 We warmly thank Professors  Magali Bardet (University of Rouen, France), Isaac A. Garcia (University of Lleida, Spain) and Andrzej J. Maciejewski (University of Zielona G\`ora, Poland) for helpful discussions and critical remarks. Last but not least, we sincerly thank Marie-Claude Werquin (University Paris 13, France) who corrected and improved our ``scientific English''.

\end{document}